\newtheorem{theorem}{Theorem}
\newtheorem{definition}{Definition}
\newtheorem{lemma}{Lemma}
\newtheorem{corollary}{Corollary}
\newtheorem{proposition}{Proposition}
\newtheorem{proof}{Proof}
\begin{document}

\markboth{A.\,M.~Mikhovich}
{Quasirationality and prounipotent crossed modules}


\title{Quasirationality and prounipotent crossed modules}

\author{A.\,M.~Mikhovich}


\maketitle

\begin{abstract}
 We study quasirational presentations ($QR$-presentations) of (pro-$p$)groups,
   which contain aspherical presentations and their subpresentations, and also still
   mysterious pro-$p$-groups with a single defining relation.  Using schematization of $QR$-presentations and
   embedding of
   the rationalized module of relations into a diagram related to certain
   prounipotent crossed module, we study cohomological properties of pro-$p$-groups with a single defining relation
   (a question of J.-P. Serre).
\end{abstract}

\keywords{quasirationality, prounipotent crossed modules.}


\section{Introduction}	
In the paper \cite{Mikh2014} we introduced the notion of
quasirational (pro-$p$) presentation, which in the discrete case
contains aspherical presentations and their subpresentations,
and also pro-$p$-groups with a single defining relation in the pro-$p$-case.
Quasirational presentations naturally generalize discrete combinatorially
aspherical presentations introduced by J.~Huebschmann, and aspherical
(pro-$p$)presentations studied by O.~V.~Melnikov in \cite{Mel1}. In the paper \cite{Mikh2017}
it has been shown that the notion of existence of quasirational presentation of (pro-$p$)group $G$
is equivalent to the statement that the homology groups $H_2(G,\mathbb{Z})$ (respectively
$H_2(G,\mathbb{Z}_p)$ in the pro-$p$-case) have no torsion, and
therefore quasirationality is independent on the choice of presentation, and is the
property of the (pro-$p$)group itself. Earlier in \cite{Mikh2014}
we have proved a simple criterion of quasirationality stating that
quasirationality is equivalent to absence of torsion in the (trivial)
coinvariant module of the relations module by the action of the group.
In the discrete case, as shown in \cite{Mikh2017},
quasirationality clarifies the difference of properties of aspherical
presentation and its subpresentation, which is curious from the viewpoint of
Whitehead's celebrated ``asphericity'' conjecture. Regarding quasirational
pro-$p$-presentations, in \cite{Mikh2017} we
proved O.~V.~Melnikov's Conjecture ``on existence of envelope''
of the class of aspherical pro-$p$-presentations, pointing out that quasirational
pro-$p$- presentations correspond to all requirements of the Conjecture stated in 1997.

In this paper we shall give the description, announced in \cite{Mikh2015}, of modules of relations of
quasirational pro-$p$-presentations by means of affine group schemes technique.
For these purposes, after recalling necessary constructions in Section~2, in Section~3 we construct
a prounipotent presentation \eqref{5} from a finite presentation of a pro-$p$-group \eqref{2}
by means of $\mathbb{Q}_p$-prounipotent completion (Definition \ref{d4}) of
finitely generated free pro-$p$-groups (``schematization''). Using the analogy with the discrete and
profinite cases, we study the prounipotent analogs of 2-reduced free simplicial groups,
crossed and pre-crossed modules. Proposition \ref{p1} shows that the prounipotent crossed module
constructed from a prounipotent presentation is free (Definition \ref{d2}).

In discrete and profinite algebraic homotopy theories \cite{BH, Por} the main
benefits from systematical development of the theory of crossed modules are obtained after their
abelianizations. Such approach is effective also for prounipotent crossed modules.
Proposition \ref{p2}, Lemma \ref{l4}, Lemma \ref{l5} show that the introduced
abelianizations have structures of topological modules (Definition \ref{d3}), and in Theorem \ref{t01}
and Corollary \ref{s0} we include such objects into a commutative diagram, which seems an analog
of the Gaschutz theory \cite{Gru} for quasirational presentations of pro-$p$-groups.
Since in Lemma \ref{l4} the rationalized relation module $\overline{R} \widehat{\otimes}\mathbb{Q}_p=\varprojlim_n
R/[R,R \mathcal{M}_n]\otimes \mathbb{Q}_p$ of a presentation \eqref{eq1}
is identified with Abelianization of a continuous prounipotent completion of $R$,
then $\overline{R} \widehat{\otimes}\mathbb{Q}_p$ is included into a commutative diagram \eqref{eq2}.
Theorem \ref{t01} and Corollary \ref{s0} should be considered also as variations of ideas of
comparison of homotopy types \cite{Pri2012} in dimension 2.
We apply the obtained results to the following question of J.-P. Serre from a remark to the \cite{Se}.

Let $G_r=F/(r)_F,$ where $(r)_F$ is the normal closure of $r\in F^p[F,F]$ in a free pro-$p$-group of finite rank $F$,
then J.-P. Serre asks the following:
``Can it be true that $cd(G_r)=2,$ only if $G_r$ is torsion free (and $r\neq1$)?''

Let us first note that pro-$p$-groups are $\mathbb{F}_p$-points of prounipotent affine group schemes
defined over $\mathbb{F}_p$ - the prime field of characteristics $p\geq 2$. Actually, consider the complete group algebra
$\mathbb{F}_pG=\varprojlim \mathbb{F}_p[G_{\alpha}]$ of a pro-$p$-group
$G=\varprojlim G_{\alpha},$ where $G_{\alpha}$ are finite $p$-groups. Each group algebra
$\mathbb{F}_p[G_{\alpha}]$ is obviously a cocommutative Hopf algebra over the field $\mathbb{F}_p$.
Then the dual Hopf algebra $\mathbb{F}_p[G_{\alpha}]^*$ (for details see the beginning of the third
part of the paper) is a finitely generated commutative Hopf algebra, and hence it
determines certain affine algebraic group scheme. Let $\mathcal{G}$ be the functor of group like
elements of a Hopf algebra. Note that
$G_{\alpha}=\mathcal{G}\mathbb{F}_p[G_{\alpha}]\cong Hom_{Alg_{\mathbb{F}_p}}(\mathbb{F}_p[G_{\alpha}]^*,\mathbb{F}_p)$
\cite[Proposition 18]{Vez}, where $Hom_{Alg_{\mathbb{F}_p}}(\mathbb{F}_p[G_{\alpha}]^*,\_)$
is the functor from the category of commutative $\mathbb{F}_p$-algebras with unit into the category of sets
which assigns to each commutative $\mathbb{F}_p$- algebra $A$ with unit the set
$Hom_{Alg_{\mathbb{F}_p}}(\mathbb{F}_p[G_{\alpha}]^*,A)$ of homomorphisms
$\phi:\mathbb{F}_p[G_{\alpha}]^*\rightarrow A$ of commutative $\mathbb{F}_p$-algebras with unit.
But $$G\cong\mathcal{G}\mathbb{F}_pG\cong \varprojlim\mathcal{G}\mathbb{F}_p[G_{\alpha}]\cong \varprojlim
Hom_{Alg_{\mathbb{F}_p}}(\mathbb{F}_p[G_{\alpha}]^*,\mathbb{F}_p)\cong Hom_{Alg_{\mathbb{F}_p}}(\mathbb{F}_p[G]^*,
\mathbb{F}_p).$$

Dimension shifting enables one to compute cohomology of a (pro-$p$)group $H$ as invariants of certain modules.
From this viewpoint, in order to look like (cohomologically) a group with elements of finite order,
it suffices for the elements of a group $H$ to act as if they were of finite order,
although they can actually be not so. In \cite{Mikh2017}, using multiplication of defining relations
$r=y^p$ in a free pro-$p$-group by elements $\zeta^p$ of special kind,
we managed to obtain elements of a free pro-$p$-group $F$ of the form $y^p\cdot \zeta^p$ which
act on finite dimensional modules of arbitrarily high given dimension exactly as the initial
relation, but are not $p$-th powers themselves.
A more detailed description of cohomology of $\mathbb{F}_p$-affine group schemes
(based not only on presence or absence of elements of finite order at $\mathbb{F}_p$-points)
should be expected from the study of the Frobenius homomorphism on the algebra of regular functions.

If a finitely generated discrete nilpotent group has no torsion, then by the known Malcev theorem
it is embedded \cite[4]{Vez} into its own rational prounipotent completion.
For $\mathbb{F}_p$-prounipotent groups, for which pro-$p$-groups are their $\mathbb{F}_p$-points,
it would be too optimistic to hope for a similar statement, but we shall show that existence of an embedding into a similar type of completion implies that cohomological dimension of such a group is less or equal to 2.
Using a description of the relations module of a prounipotent group with one defining relation \cite[Corollary 12]{Mikh2015}
we point out (Proposition \ref{p01} and Corollary \ref{s4}),
in terms of continuous prounipotent completion, the condition under which a finitely
generated pro-$p$-group with one defining relation has cohomological dimension 2.
The proof is based on Theorem \ref{t01} and Corollary \ref{s0}.
From our viewpoint this condition contains a variation of J.-P. Serre's question with a positive answer.

\section{Schematization and quasirationality}
\label{subsec1}
By definition, a finite type presentation of a discrete group $G$ is an exact sequence

\begin{equation}
1 \rightarrow R \rightarrow F \xrightarrow{\pi} G \rightarrow 1 \label{eq1}
\end{equation}
in which $F=F(X)$ is the free group with a finite set $X$ of generators, and $R$
is a normal subgroup in $F$ generated by a finite number of defining relations $r\in R$.
By a pro-$p$-group one calls a group isomorphic to projective limit of finite $p$-groups.
This is a topological group (with the topology of direct product) which is a compact totally disconnected group. For such groups one has a presentation theory
which is in many aspects similar to the combinatorial theory of discrete groups \cite{Koch}, \cite{ZR}.
By analogy with finite type presentation of a discrete group, we shall say that a pro-$p$-group $G$
is given by a finite type pro-$p$- presentation if $G$ is included into an exact sequence \eqref{eq1}
in which $F$ is a free pro-$p$-group with finite number of generators, and $R$
is a closed normal subgroup topologically generated by a finite number of elements in $F$,
contained in the Frattini subgroup of the group $F$ \cite{Koch}, \cite{ZR}.

Let $R=\varprojlim R_{\alpha}$ be a profinite ring ($R_{\alpha}$ are finite rings),
then denote by $RG$ the completed group algebra of a pro-$p$-group $G$.
By a completed group algebra we understand the topological algebra $RG=\varprojlim RG_{\mu}$ \cite[5.3]{ZR},
where $G=\varprojlim G_{\mu}$ is a decomposition of the pro-$p$-group $G$ into the projective limit of
finite $p$-groups $G_{\mu}$.

For discrete groups, $p$ will run over all primes, while for pro-$p$- groups $p$ is fixed. Let $G$
be a (pro-$p$)group with a finite type (pro-$p$)presentation \eqref{eq1}, and
$\overline{R}=R/[R,R]$ be the corresponding \emph{$G$-module of relations}, where $[R,R]$ is the commutant,
and the action of $G$ is induced by conjugation by $F$ on $R$. For each prime number $p\geq2$
denote by ${\Delta}_p$ the augmentation ideal of the ring $\mathbb{F}_pG.$ In the pro-$p$-case
by ${\Delta}^n$ we understand the closure of the module generated by the $n$-th powers of elements from
$\Delta={\Delta}_p$, and in the discrete case this is the $n$-th power of the ideal ${\Delta}_p$ \cite{Pas}.
The properties of this filtration in the pro-$p$-case are exposed in \cite[7.4]{Koch},
and in the discrete case the properties of the Zassenhaus filtration are similar \cite[Chap.11]{Pas},
the difference is in the use of the usual group ring instead of the completed one.

Denote by $\mathcal{M}_n, n\in \mathbb{N}$ its Zassenhaus filtration in $F$
with coefficients in the field $\mathbb{F}_p$, which is defined by the rule
$\mathcal{M}_{n,p}=\{f \in F\mid f-1 \in {\Delta}^n_p\}.$
We shall denote these filtrations simply by $\mathcal{M}_n$, omitting the prime $p$,
since its choice will be always clear from the context.
Introduce the notation $\mathbb{Z}_{(p)}$ for $\mathbb{Z}$ in the case of discrete groups and
$\mathbb{Z}_{p}$ in the case of pro-$p$-groups.

\begin{definition} We shall call the (pro-$p$)presentation \eqref{eq1} quasirational ($QR$-(pro-$p$) presentation)
if one of the following three equivalent conditions is satisfied:

(i) for each $n>0$ and for each prime $p\geq2$, the $F/R\mathcal{M}_n$-module $R/[R,R\mathcal{M}_n]$
has no $p$-torsion ($p$ is fixed for pro-$p$-groups and runs over all primes $p\geq2$
and the corresponding Zassenhaus $p$-filtrations in the discrete case);

(ii) the quotient module of coinvariants $\overline{R}_G=\overline{R}_F=R/[R,F]$ has no torsion;

(iii) $H_2(G,\mathbb{Z}_{(p)})$ has no torsion.
\end{definition}
The proof of equivalence of conditions (i) -- (iii) is contained in \cite[Proposition ~4]{Mikh2014} and
\cite[Proposition 1]{Mikh2017a}. $QR$-presentations are curious in particular due to the fact that
they contain aspherical presentations of discrete groups and their subpresentations,
and also pro-$p$-presentations of pro-$p$-groups with one defining relation \cite{Mikh2014}.

By an affine group scheme over a field $k$ one calls a representable $k$-functor $G$ from the
category $Alg_k$ of commutative $k$-algebras with unit to the category of groups.
If $G$ is representable by an algebra $\mathcal{O}(G)$, then for any commutative $k$-algebra $A$ the functor $G$ is given
by the formula
$$G(A) = Hom_{Alg_k}(\mathcal{O}(G),A).$$
Of course, we assume that the considered homomorphisms $Hom_{Alg_k}$ map the unit of the algebra $\mathcal{O}(G)$
to the unit of $A$. The algebra $\mathcal{O}(G)$ representing the functor $G$ is usually called the
\emph{algebra of regular functions} of $G$. The Yoneda lemma implies anti-equivalence
of the categories of affine group schemes and commutative Hopf algebras (with unit) \cite[1.3]{Wat}.
Let us say that an affine group scheme $G$ is \emph{algebraic} if its coordinate Hopf algebra $\mathcal{O}(G)$
is a finitely generated algebra.


Let $A$ be a coalgebra over a field $k$ with the coproduct $\Delta:A\rightarrow A\otimes A$ and the counit
$\varepsilon: A\rightarrow k$. Let us say that $A$ is conilpotent (or ``connected'' in the terminology of
\cite[B.3]{Qui5}) if there exists an element $1_A\in A$ such that $\varepsilon(1_A)=1_k,$ where $1_k$
is the unit of the field $k$, $\Delta(1_A)=1_A\otimes 1_A$, and there exists $n\in \mathbb{N}$
such that $A=F_nA,$ where $F_rA$ is the filtration in $A$ defined recursively by the following formulas:
$F_0A=k\cdot 1_A, F_rA=\{x\in A\mid \Delta x - x\otimes 1_A - 1_A\otimes x \in F_{r-1}\otimes F_{r-1}\}.$

\begin{definition} \label{d5} By a unipotent group one calls an affine algebraic group scheme
$G$ whose Hopf algebra of regular functions $\mathcal{O}(G)$ is conilpotent (see \cite[8]{Wat} and
\cite[Proposition 16]{Vez}, where equivalent definitions are given). An affine group scheme
$G=\varprojlim G_{\alpha},$ where $G_{\alpha}$ are affine algebraic group schemes
over a field $k$, is called a prounipotent group if each $G_{\alpha}$ is a unipotent group.
\end{definition}

Let $G$ be a prounipotent group with the algebra of regular functions $\mathcal{O}(G)$, then
the unipotency condition  $G_{\alpha}$ in Definition \ref{d5} is equivalent to the condition that
the so called conilpotent filtration
$0\subset C_0=I^{\perp}\subset C_1=(I^2)^{\perp}\subset...\subset C_k=(I^{k+1})^{\perp}\subset...$
is exhausting (such coalgebras are usually called locally conilpotent), i.~e. $\mathcal{O}(G)=\varinjlim C_i,$
where, as usual, $I$ is the augmentation ideal (the kernel of the counit) in the Hopf algebra
$\mathcal{O}(G)^*,$ and $(I^i)^{\perp}=\{r\in \mathcal{O}(G):r(\phi)=0,\forall\phi\in I^i\}$ (see
Section \ref{subsec2} regarding the definition of $\mathcal{O}(G)^*$ and the duality questions).

Let $A$ be a Hopf algebra over a field $k$ of characteristic 0, in which: 1) the product
is commutative; 2) the coproduct is conilpotent. Then, as an algebra, $A$
is isomorphic to a free commutative algebra \cite[Theorem 3.9.1]{Car}.
Therefore, each unipotent group $G_{\alpha}$ is isomorphic as an algebraic variety to certain
$n_{\alpha}$-dimensional affine space $\mathbb{A}^{n_{\alpha}}_k$ \cite[Theorem 4.4]{Wat}
and hence it is an affine algebraic group, and consequently \cite[Corollary 4.4]{Wat} a linear algebraic group.
Thus, we can use results from the theory of linear algebraic groups in characteristic 0.

Also one has a well known correspondence between unipotent groups over $k$
and nilpotent Lie algebras over $k$, which assigns to a unipotent group its Lie algebra.
This correspondence is easily extended to a correspondence between prounipotent groups over $k$
and pronilpotent Lie algebras over $k$ \cite[Appendix А]{Qui5}.
Functoriality of this correspondence enables one to interpret, when it is convenient, problems on prounipotent groups
in the language of Lie algebras. For example, the image of a closed subgroup
under a homomorphism of pro\-uni\-pot\-ent groups will be always a closed subgroup.

The group of $\mathbb{Q}_p$-points of any affine group scheme $G$ over $\mathbb{Q}_p$ has the $p$-adic topology.
Actually, \cite[Part 2]{DM} shows that $G$ can be represented as a projective limit
$G=\varprojlim G_{\alpha}$
of a surjective projective system of linear algebraic groups. Each $G_\alpha(\mathbb{Q}_p)$ has a canonical
$p$-adic topology induced by an embedding $G_{\alpha}\hookrightarrow GL_n$.
Let us define a topology on $G(\mathbb{Q}_p)$ as the topology of the projective limit
$G(\mathbb{Q}_p)=\varprojlim G_{\alpha}(\mathbb{Q}_p).$

\begin{definition}\label{d4}\cite[A.2.]{HM2003}, \cite{Pri2012}
Let us fix a group $G$ (with pro-$p$- topology). Define the (continuous)
prounipotent completion of $G$ as the following universal diagram, in which $\rho$ is a
(continuous) Zariski dense homomorphism from $G$ to the group of $\mathbb{Q}_p$-points
of a prounipotent affine group $G_w^{\wedge}$:
$$\xymatrix @R=0.5cm{
                &         G^{\wedge}_w(\mathbb{Q}_p)  \ar[dd]^{\tau}     \\
 G \ar[ur]^{\rho} \ar[dr]_{\chi}                 \\
                &         H(\mathbb{Q}_p)              }$$
We require that for each continuous and Zariski dense homomorphism
$\chi$ there exists a unique homomorphism $\tau$ of prounipotent groups making the diagram commutative.
\end{definition}
Such an object always exists. Indeed, let $G=\varprojlim G_{\alpha}.$
By the Zariski closure of a subgroup $K$ in $G(k)$
we understand the least proalgebraic $k$-subgroup $H\leq G$ such that $H(k)\geq K$.
It is evident that $H=\varprojlim_{\alpha}H_{\alpha},$ where $H_{\alpha}$ is the Zariski closure of
$K_{\alpha}$ in $G_{\alpha}(k).$ Consider the set of pairs $(\phi_{\alpha},U_{\alpha}),$ where
$\phi_{\alpha}: G\rightarrow U_{\alpha}(\mathbb{Q}_p)$ is a continuous homomorphism.
This is a projective system, since the pair
$$(\phi_{\alpha}\times\phi_{\beta},\overline{\phi_{\alpha}(G)\times\phi_{\beta}(G)})=(\phi_{\alpha\beta},U_{\alpha\beta})
\succeq (\phi_{\alpha},U_{\alpha}),(\phi_{\beta},U_{\beta}).$$
Here for any $\alpha,\beta$ we have denoted by
$\overline{\phi_{\alpha}(G)\times\phi_{\beta}(G)}$ the Zariski closure of the direct product
$\phi_{\alpha}(G)\times\phi_{\beta}(G)$ in $U_{\alpha}(\mathbb{Q}_p)\times U_{\beta}(\mathbb{Q}_p)$).
The partial ordering is given by the rule $(\phi_{\alpha},U_{\alpha})\succeq (\phi_{\beta},U_{\beta})$
if there exists $U_{\alpha}\rightarrow U_{\beta}$ such that
$U_{\alpha}(\mathbb{Q}_p)\twoheadrightarrow U_{\beta}(\mathbb{Q}_p)$ is an epimorphism compatible
on the images of $G$.
Now put $(\phi,G^{\wedge}_w)=\varprojlim_{\gamma} (\phi_{\gamma},U_{\gamma}).$
It is not difficult to see that the prounipotent completion of a free group $F(X)$
satisfies the universal properties inherent to a free object,
and we shall, by analogy with the discrete or pro-$p$ cases, call such a group free
and denote it by $F_u(X)$.

\section{Prounipotent crossed modules} \label{subsec2}

One has the following standard chain of equivalences:
the category of simplicial proalgebraic affine groups over a field $k$
is equivalent to the category of affine simplicial group schemes \cite[2.1]{DM},
which are dual to cosimplicial commutative Hopf algebras \cite[1.6]{Milne},
and the latter are dual to simplicial cocommutative linearly compact (pro-finite dimensional) Hopf algebras.
Let us describe the latter duality in more detail.

For an arbitrary discrete Hopf $k$-algebra $A$, the dual Hopf algebra $A^{\ast}=Hom_k(A,k)$
is a linearly compact Hopf algebra, i.~e. it is representable as the inverse limit
$A^{\ast}\cong\varprojlim A^{\ast}/U^{\perp},$ where $U$ are finite dimensional subspaces in $A$
\cite[1.5.33]{DNR}. The basis of the system of neighborhoods of zero in $A^{\ast}$ is formed by the
$k$-subspaces
$$U^{\perp}=\{\phi \in A^{\ast} \mid \phi(U)=0, \mbox{where $U$ is a finite dimensional $k$-subspace of $A$} \}.$$
For any linearly compact space $V$ it makes sense to speak about the dual space $V^\vee$
of continuous linear functions, and the evaluation map defines the Pontryagin duality \cite[1.2]{Die}
(here $V$ is a discrete or linearly compact space)
$$e: V\rightarrow V^{*\vee}, \quad v\mapsto(\phi \rightarrow \phi(v)).$$
Unifying what was said above, we obtain the following Corollary \cite[\S2,14]{Die}:

\begin{corollary}
Assigning $V\mapsto V^*$ yields a one-to-one correspondence between the structures of
(commutative) Hopf algebras on a vector $k$-space $V$ with discrete topology and the
structures of (cocommutative) linearly compact Hopf algebras on $V^*$.
\end{corollary}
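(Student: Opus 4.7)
The plan is to deduce the Corollary from the Pontryagin-type duality $e\colon V\to V^{\ast\vee}$ already invoked in the excerpt, plus a careful dictionary between tensor products on the discrete side and completed tensor products on the linearly compact side. The only real content of the Corollary beyond the vector-space duality is that each piece of Hopf-algebra structure (multiplication, unit, comultiplication, counit, antipode) dualizes to a piece of the required type, and that commutativity and cocommutativity get exchanged.

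First I would record the tensor compatibility. For a discrete $k$-space $A$, the natural map $A^{\ast}\mathbin{\widehat{\otimes}}A^{\ast}\to (A\otimes A)^{\ast}$, defined on elementary tensors by $(\phi\otimes\psi)(a\otimes b)=\phi(a)\psi(b)$ and extended to the completed tensor product with respect to the topology with basis $U^{\perp}$, is an isomorphism of linearly compact spaces; this is a direct consequence of writing $A=\varinjlim U$ over finite dimensional subspaces $U$ and using $(U\otimes U)^{\ast}\cong U^{\ast}\otimes U^{\ast}$ together with $\varprojlim$ on the dual side. The inverse identifications $V^{\vee}\mathbin{{\otimes}}V^{\vee}\cong (V\mathbin{\widehat{\otimes}}V)^{\vee}$ for a linearly compact $V$ are obtained in the same way. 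These are exactly the statements one needs to convert operations on $A$ into cooperations on $A^{\ast}$ and vice versa.

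Next I would push through the dictionary. Given a Hopf algebra structure $(m,\eta,\Delta,\varepsilon,S)$ on discrete $A$, apply $(-)^{\ast}$ to get continuous $k$-linear maps
\[
m^{\ast}\colon A^{\ast}\longrightarrow (A\otimes A)^{\ast}\cong A^{\ast}\mathbin{\widehat{\otimes}}A^{\ast},\qquad
\eta^{\ast}\colon A^{\ast}\longrightarrow k,
\]
and similarly $\Delta^{\ast}\colon A^{\ast}\mathbin{\widehat{\otimes}}A^{\ast}\to A^{\ast}$, $\varepsilon^{\ast}\colon k\to A^{\ast}$, $S^{\ast}\colon A^{\ast}\to A^{\ast}$. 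Functoriality of $(-)^{\ast}$ together with the tensor identification turns the Hopf axioms (associativity, coassociativity, compatibility of $\Delta$ with $m$, antipode identity) into their duals, verbatim, because every axiom is an equality of compositions of morphisms in the rigid monoidal setting we have just established. In particular, commutativity $m\circ\tau=m$ dualizes to $\tau\circ\Delta^{\ast}=\Delta^{\ast}$, i.e.\ to cocommutativity of $A^{\ast}$, and conversely. The same argument starting from a cocommutative linearly compact Hopf algebra $B$ produces a commutative discrete Hopf algebra $B^{\vee}$.

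Finally I would check that the two assignments $A\mapsto A^{\ast}$ and $B\mapsto B^{\vee}$ are mutually inverse. On underlying spaces this is the evaluation isomorphism $e\colon V\to V^{\ast\vee}$ quoted from Dieudonn\'e, and since the Hopf structure we put on $A^{\ast\vee}$ is obtained by dualizing twice, $e$ is automatically a Hopf algebra isomorphism by naturality. The main obstacle I expect is bookkeeping around the topologies: one must be careful that $m^{\ast}$ really lands in the \emph{completed} tensor product and that all maps are continuous with respect to the basis $U^{\perp}$ of neighborhoods of zero, so that reading off cocommutativity and coassociativity is legitimate. Once the tensor compatibility in the first step is established, however, this becomes routine, and the Corollary follows.
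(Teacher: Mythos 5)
Your proposal is correct and follows exactly the route the paper intends: the paper gives no proof of its own, merely citing Dieudonn\'e and the preceding discussion of the evaluation map $e\colon V\to V^{\ast\vee}$ and the linearly compact topology on $A^{\ast}$, and your argument is the standard expansion of that citation. You correctly identify the one genuinely load-bearing step, namely the monoidal compatibility $(A\otimes A)^{\ast}\cong A^{\ast}\mathbin{\widehat{\otimes}}A^{\ast}$ (and its inverse for linearly compact spaces), after which the transport of the Hopf axioms and the exchange of commutativity with cocommutativity are formal.
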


From this moment and below we shall consider only either discrete commutative or
linearly compact cocommutative Hopf algebras.

First, following \cite[A.2]{Qui5}, let us state the notion of a complete augmented algebra ($CAA$)
over a field $k$. By a filtration of an algebra $A$ we shall understand a decreasing sequence of
subspaces $A=F_0A \supset F_1A \supset...$ such that $1\in F_0$ and $F_n\cdot F_m\subset F_{m+n}.$
In this case each space of the filtration is a two-sided ideal in $A$, and
$grA=\oplus_{i=0}^\infty gr_i A= \oplus_{i=0}^\infty F_iA/F_{i+1}A$ has the natural structure
of a graded algebra.

An augmented algebra $A$ with the augmentation ideal $I$ and a filtration $F_nA$
is called a complete augmented algebra if the following conditions are satisfied:

1) $\widehat{A}=\varprojlim A/I^n$ (i.~e. $A$ is complete in the $I$-adic topology);

2) the algebra $grA$ is generated by $gr_1A$;

3) $F_1A=I.$

Quillen notes that condition 2), taking into account conditions 1) and 3) from the definition of $CAA$,
is equivalent to the requirement that $F_nA$ coincides with the closure of $I^n$ in the $I$- adic topology.

Recall that the completed tensor product $E \widehat{\otimes}_k F$ of topological $k$- vector spaces $E$ and $F$
is the completion of $E \otimes_k F$ with respect to the topology (called the topology of tensor product)
given by the fundamental system of neighborhoods of 0 consisting of the sets
$V \otimes_k F + E\otimes_k W$, where $V$ (respectively $W$) is an arbitrary element of
the fundamental system of neighborhoods of 0 consisting of vector subspaces of $E$ (respectively $F$) \cite[1.2.4]{Die}.

\begin{definition} Let us say that a complete augmented linearly compact algebra $A$ is a
complete Hopf algebra (below CHA for short) if $A$ is endowed with a map
$\triangle:A \rightarrow A\widehat{\otimes} A$ of complete linearly compact algebras,
which is included into the cocommutativity and coassociativity diagrams, and has the augmentation
$A\rightarrow k$ as a counit.
 \end{definition}

Our definition is somewhat different from the definition of $CHA$ in Quillen's paper cited above,
since we additionally require linear compactness, having in mind that $CHA$ arises as a dual of Hopf
algebra.

It is convenient to control linear compactness in the definition of $CHA$, following Quillen \cite[A.3]{Qui5},
using the augmentation ideal $I$, assuming $dim_k I/I^2<\infty$.
In this situation one can construct the prounipotent completion of a finitely
generated discrete group explicitly. Let $kG$ be the group ring of a finitely generated
discrete group $G$ over certain field $k$, and let $I$ be the kernel of augmentation
(the fact that $G$ is finitely generated yields the fact that the quotients $kG/I^n$ have finite dimension).
Taking into account continuity of the coproduct $\triangle:kG\rightarrow kG\otimes kG$ in the Hopf algebra
$kG$ with respect to the $I$-adic topology (the topology on $\otimes$ is defined using the filtration
$F_n(kG\otimes kG) = \sum_{l+j=n} I^l\otimes I^j \subset kG\otimes kG$), using the $I$-adic completion
we obtain the continuous coproduct
$\widehat{\triangle}: \widehat{k}G\rightarrow \widehat{k}G\widehat{\otimes }\widehat{k}G\cong \widehat{kG\otimes kG},$
determining the structure of a complete linearly compact cocommutative Hopf algebra on
$\widehat{k}G.$ A direct check shows that the prounipotent group with the algebra of regular functions
$\widehat{k}G^\vee$ is the prounipotent completion of $G$ in the sense of Definition \ref{d4}.

In simplicial group theory, by analogy with glueing two-dimencional cells,
it is convenient to identify presentation \eqref{eq1} with the second step of construction
of free simplicial (pro-$p$)resolution of a (pro-$p$)group $G$ by the method ``pas-$\grave{a}$-pas''
going back to Andre \cite{And,Mikh2012}:

\begin{equation}
\xymatrix{
& \ar@<1ex>[r]\ar@<-1ex>[r] \ar[r] & {F(X\cup Y)} \ar@<0ex>[r]^{d_0}\ar@<-2ex>[r]^{d_1} &
F(X) \ar@<-2ex>[l]_{s_0} \ar[r] & G,} \label{2}
\end{equation}
here $d_0, d_1, s_0$ for $x \in X, y \in Y, r_y \in R$ are defined by the identities
$d_0(x)=x,  d_0(y)=1, d_1(x)=x,  d_1(y)=r_y, s_0(x)=x.$

Recall \cite{And, Mikh2012} that \eqref{2} is a free simplicial (pro-$p$) group of finite type,
degenerate in dimensions greater than two. If a pro-$p$-presentation \eqref{eq1} is minimal then
$$|Y|=dim_{\mathbb{F}_p}H^2(G,\mathbb{F}_p),|X|=dim_{\mathbb{F}_p}H^1(G,\mathbb{F}_p).$$
Let us assign to the simplicial finite type presentation \eqref{2} a presentation in the category
of complete Hopf algebras.
First, let us consider the corresponding diagram of group rings,
$\xymatrix{kF(X\cup Y)\ar[r]^{d_0} \ar@<-2ex>[r]^{d_1} & kF(X) \ar@<-2ex>[l]_{s_0}
}$.
Then we obtain from \eqref{2} using the $I$-adic completion,
taking into account finite generation of groups, the following diagram of complete
linearly compact Hopf algebras:
$$\xymatrix{\widehat{k}F(X\cup Y)\ar[r]^{d_0} \ar@<-2ex>[r]^{d_1} & \widehat{k}F(X) \ar@<-2ex>[l]_{s_0}
},$$
where $\widehat{k}F(X)= \varprojlim kF(X)/I^n,$ and $I$ is the augmentation ideal in $kF(X)$.
Applying the Pontryagin duality and the antiequivalence of the categories of commutative
Hopf algebras and affine group schemes, we obtain a diagram of free prounipotent groups
$$\xymatrix{F_u(X\cup Y)\ar[r]^{d_0} \ar@<-2ex>[r]^{d_1} & F_u(X) \ar@<-2ex>[l]_{s_0}}.$$
The fact that simplicial identities hold in \eqref{2} implies similar identities for the obtained
diagram of prounipotent groups, which is implicitly used in all the constructions of the paper.
For $k=\mathbb{F}_p$ the constructions yield pro-$p$-completions and the homotopy theory developed in
\cite{Mikh2012}, and also to the concepts of $p$-adic homotopy theory.

\begin{definition}
Let us say that we are given a fiite type presentation of a prounipotent group $G_u$ if
there exist finite sets $X$ and $Y$ such that $G_u$ is included into the following diagram
of free prounipotent groups:

\begin{equation}
\xymatrix{
{F_u(X\cup Y)} \ar@<0ex>[r]^{d_0}\ar@<-2ex>[r]^{d_1} & F_u(X) \ar@<-2ex>[l]_{s_0} \ar[r] & G_u},\label{5}
\end{equation}
in which the identities similar to \eqref{2} and $$G_u\cong F_u(X)/d_1(Kerd_0)$$ hold.
Denote $R_u=d_1(Kerd_0)$; this is a normal subgroup in $F_u(X)$,
and hence we obtain an analog of the notion of presentation \eqref{eq1} for a prounipotent group $G_u,$
to which we shall refer also as to a presentation of type \eqref{eq1}.
\end{definition}

By the schematic homotopy type of a pair
$$(X,k\mid \mbox{$X$ is a connected simplicial set},\mbox{$k$ is a field)}$$ one calls a simplicial
$k$- proalgebraic group $GX^{\wedge}_{alg_k}$ \cite[p.655]{KPT}, \cite{Pri2008}, where
$G$ is the Kan functor \cite[page 13--14]{Mikh2012}, and $GX^{\wedge}_{alg_k}$ is the
componentwise $k$- proalgebraic complement of the free simplicial group $GX$ \cite[Def. 1.6]{Pri2008}.
The constructions above are particular cases of the notion of scheme homotopy type,
expressed here in a concrete form of prounipotent completions of
finite type presentations of (pro-$p$) groups.

Below we shall work in the category of prounipotent groups over the field $k=\mathbb{Q}_p$.
By an action from the left of an affine group scheme $G_1$ on an affine group scheme $G_2$
one understands a natural transformation of functors $G_1\times G_2\rightarrow G_2$
included into the standard action diagrams \cite[6n]{Milne}.

\begin{definition} \label{d1}
By a prounipotent pre-crossed module one calls a triple $(G_2,G_1,\partial)$, where $G_1,G_2$
are prounipotent groups, $\partial: G_2 \to G_1$ is a homomorphism of prounipotent groups, $G_1$ acts on $G_2$
from the left, satisfying, for any $\mathbb{Q}_p$-algebra $A$, the identity
$$\text{CM 1) } \partial({}^{g_1}g_2) = g_1 \partial(g_2) g_1^{-1},$$
where the action is written in the form $(g_1,g_2) \to {}^{g_1}g_2$, $g_2 \in G_2(A), g_1 \in G_1(A).$
\end{definition}

\begin{definition}
A prounipotent pre-crossed module is called crossed if  for any $\mathbb{Q}_p$-algebra $A$ the following additional identity holds:
$$\text{CM 2) } ^{\partial(g_2)}g_2'= g_2 g_2' g_2^{-1}.$$
The identity CM 2) is called the Peifer identity.
Such prounipotent crossed module will be denoted by $(G_2,G_1,\partial)$.
\end{definition}

\begin{definition}
By a morphism of prounipotent crossed modules
$$(G_2,G_1, \partial) \to (G'_2,G'_1, \partial')$$ one calls a pair of homomorphisms
$\varphi: G_2 \to G'_2 \text{ and } \psi: G_1 \to G'_1$ of prounipotent groups such that
 $\varphi(^{g_1}g_2) = ^{\psi(g_1)}\varphi (g_2) \text{
and } \partial' \varphi(g_2) = \psi \partial(g_2).$
\end{definition}

For us the main role will be played by prounipotent crossed modules arising from finite
presentations of (pro-$p$)groups. Recall the known construction of crossed modules.

Assume we are given a (pro-$p$)presentation \eqref{eq1} in the simplicial form \eqref{2}.
The group $F$ acts on $R$ by conjugation $g \mapsto ^fg = f g f^{-1}, \, g \in
R, \, f \in F.$
Let us construct a pre-crossed module of this presentation. To this end,
let us first pass to the beginning of the Moore complex of the simplicial group \eqref{2}:
$$Kerd_0=NF_1\xrightarrow{d_1} NF_0,$$
where $NF_0=F(X), NF_1=Kerd_0.$
Note that both in the discrete case \cite[Proposition 1]{BH} and in the pro-$p$-case \cite[8.1.3]{ZR}
one has $NF_1=Kerd_0\cong F(Y\times F(X))$. Now we can construct the free pre-crossed module of the
(pro-$p$) presentation \eqref{2} on the set $Y$,
$$F(Y\times F(X))\xrightarrow{d_1} F(X),$$
where the action of $F(X)$ is given by ${}^fa=s_0(f)as_0(f)^{-1},$ where $f \in F(X), a \in F(Y \times F(X))$.
A detailed study of this construction in the pro-$p$ and discrete cases can be found in \cite[Proposition 12]{Mikh2012},
\cite[3]{BH}.

Below we shall define the notion of free (pre-)crossed module for prounipotent presentations of the form \eqref{5}.
\begin{definition} \label{d2}
A prounipotent (pre-)crossed module $(G_2,G_1,d)$ is called a free prounipotent pre-crossed module with the base
$Y\in G_2(\mathbb{Q}_p)$ if $Y$ generates a Zariski dense $G_1(\mathbb{Q}_p)$- subgroup in $G_2(\mathbb{Q}_p)$
with the following universal property:
for any prounipotent (pre-) crossed module $(G'_2,G'_1,d')$ and a subset $\nu(Y) \in G'_2(\mathbb{Q}_p)$,
for a function $\nu:Y \rightarrow G'_2(\mathbb{Q}_p),$ with a Zariski dense
$G'_1(\mathbb{Q}_p)$- group closure, and for any
epimorphism of prounipotent groups $f:G_1\twoheadrightarrow G'_1$ such that
$fd(\mathbb{Q}_p)(Y)=d'(\mathbb{Q}_p)\nu(Y)$, there exists a unique homomorphism
of prounipotent groups $h:G_2\rightarrow G'_2$ such that $h(\mathbb{Q}_p)(Y)=\nu(Y)$
and the pair $(h,f)$ is a homomorphism of (pre-)crossed modules.
\end{definition}

\begin{proposition} \label{p1}
Assume we are given a prounipotent finite type presentation \eqref{5} then
$Kerd_0 \xrightarrow{d_1} F_u(X)$
is a free prounipotent pre-crossed module on the set $Y$.
\end{proposition}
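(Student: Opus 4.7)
The plan is to lift the known discrete/pro-$p$ result that $Kerd_0\cong F(Y\times F(X))$ is the free pre-crossed module on $Y$ to the prounipotent setting via the schematization functor. Since \eqref{5} arises from \eqref{2} by termwise $\mathbb{Q}_p$-prounipotent completion, I would first exploit the simplicial splitting $s_0\colon F_u(X)\to F_u(X\cup Y)$ with $d_0 s_0=\mathrm{id}$, which yields a semidirect decomposition $F_u(X\cup Y)\cong Kerd_0\rtimes F_u(X)$, parallel to the discrete decomposition $F(X\cup Y)\cong F(Y\times F(X))\rtimes F(X)$. Via the Zariski-dense inclusion $F(X\cup Y)\hookrightarrow F_u(X\cup Y)(\mathbb{Q}_p)$, the generating set $Y$ lies in $Kerd_0(\mathbb{Q}_p)$, and the splitting guarantees that its $F_u(X)(\mathbb{Q}_p)$-orbit is Zariski dense in $Kerd_0(\mathbb{Q}_p)$; this verifies the first half of Definition~\ref{d2}.

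For the universal property, suppose we are given a prounipotent pre-crossed module $(G'_2,G'_1,d')$, a map $\nu\colon Y\to G'_2(\mathbb{Q}_p)$ whose $G'_1(\mathbb{Q}_p)$-closure is Zariski dense, and an epimorphism $f\colon F_u(X)\twoheadrightarrow G'_1$ with $fd_1(\mathbb{Q}_p)(y)=d'(\mathbb{Q}_p)\nu(y)$ for $y\in Y$. Restricting $f$ to the dense discrete subgroup $F(X)\subset F_u(X)(\mathbb{Q}_p)$ equips $G'_2(\mathbb{Q}_p)\to G'_1(\mathbb{Q}_p)$ with the structure of a pre-crossed module over $F(X)$. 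By the discrete/pro-$p$ universal property of $F(Y\times F(X))\xrightarrow{d_1} F(X)$ recalled in \cite[Proposition~12]{Mikh2012}, \cite[Proposition~1]{BH}, there is a unique equivariant homomorphism $h_0\colon F(Y\times F(X))\to G'_2(\mathbb{Q}_p)$ with $h_0(y)=\nu(y)$ and $d'h_0=(f|_{F(X)})\circ d_1$. The image of $h_0$ is by hypothesis Zariski dense in $G'_2$, so applying Definition~\ref{d4} to $h_0$ (together with the identification of $Kerd_0$ as the Zariski closure of $F(Y\times F(X))$ inside $F_u(X\cup Y)$) produces a morphism $h\colon Kerd_0\to G'_2$ of prounipotent groups. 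Equivariance with respect to $F_u(X)$, the identity $d'\circ h = f\circ d_1$, and uniqueness of $h$ then all follow because both sides of each identity are morphisms of prounipotent groups agreeing on the Zariski-dense subset generated by $Y$.

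The step I expect to be the main obstacle is the clean identification of $Kerd_0$ as the appropriate prounipotent completion of $F(Y\times F(X))$ needed to invoke Definition~\ref{d4}; prounipotent completion is not left exact in general, so one must use the particular split structure of \eqref{5}. I would handle this by combining the semidirect splitting above with the exactness properties of Pontryagin duality and $I$-adic completion of Hopf algebras used in the construction of \eqref{5}: the Hopf-algebra map dual to $d_0$ admits a section dual to $s_0$, so passage to the kernel commutes with prounipotent completion in this split case. Once this identification is secured, everything else is formal manipulation with universal properties and Zariski density.
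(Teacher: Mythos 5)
Your overall strategy --- deduce the prounipotent universal property from the discrete/pro-$p$ one by passing to completions --- is not the paper's, and it stalls exactly at the step you flag yourself: the identification of $Kerd_0$ (the kernel inside the prounipotent presentation \eqref{5}) with the continuous prounipotent completion of $F(Y\times F(X))$ (the kernel inside \eqref{2}). This identification is not a formal consequence of the splitting $s_0$. The splitting does give you that the image of $F(Y\times F(X))$ is Zariski dense in $Kerd_0$, i.e.\ that the comparison map $\bigl(F(Y\times F(X))\bigr)^{\wedge}_w\twoheadrightarrow Kerd_0$ is onto; but prounipotent completion is a left adjoint, so it preserves coequalizers, not kernels, and injectivity of this comparison map is precisely what is in doubt. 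The paper itself treats the two objects as genuinely distinct: the whole point of diagram \eqref{eq2} is the comparison map $\kappa\colon \overline{C^{\wedge}_w}\to\overline{C_u}$, and Lemma \ref{l5} versus Corollary \ref{s0} compute the two abelianizations as $\varprojlim_k(\mathbb{Q}_p[G/\mathcal{M}_k])^{|Y|}$ and $(\mathcal{O}(G_u)^*)^{|Y|}$ respectively --- objects that differ in general, with $\kappa$ shown to be an isomorphism only under the strong extra hypotheses of Proposition \ref{p01}. Your proposed repair (``the Hopf-algebra map dual to $d_0$ admits a section, so passage to the kernel commutes with completion'') is therefore not merely unsubstantiated; it contradicts the architecture of the paper. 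There is also a secondary issue: the map $h_0$ you obtain from the discrete universal property must be checked to be continuous for the topology induced on $F(Y\times F(X))$ from $F(X\cup Y)$ before Definition \ref{d4} can be invoked, and $F(Y\times F(X))$ is not finitely generated, so continuity is not automatic.

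The paper avoids all of this by never leaving the prounipotent category. Given a prounipotent pre-crossed module $(A,G,\partial)$, a map $\nu\colon Y\to A(\mathbb{Q}_p)$ and an epimorphism $f\colon F_u(X)\to G$, it forms the semidirect product $A\leftthreetimes G$ (prounipotent, since the action is), equips it with the homomorphism $\widetilde{\partial}(a,b)=\partial(a)\cdot b$ (a homomorphism by CM~1), and uses only the freeness of $F_u(X\cup Y)$ as a free prounipotent group to define $\psi\colon F_u(X\cup Y)\to A\leftthreetimes G$ on the generators ($\psi=f\circ d_1$ on $X$, $\psi=\nu$ on $Y$); since the composite of $\psi$ with the projection to $G$ equals $f\circ d_0$, the restriction $\varphi=\psi|_{Kerd_0}$ lands in $A$ and furnishes the required morphism, with uniqueness following from Zariski density of the $F_u(X)$-orbit of $Y$ in $Kerd_0$. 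If you want to salvage your route, you would need the \emph{relative} (i.e.\ $F_u(X)$-equivariant) completion of $F(Y\times F(X))$ rather than the absolute one, and proving that this relative completion equals $Kerd_0$ is essentially equivalent to the proposition itself; the direct argument above is both shorter and free of that circularity.
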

\begin{proof}
For each function $\nu:Y\rightarrow A(\mathbb{Q}_p),$
where $A\xrightarrow{\partial} G$ is a prounipotent pre-crossed module,
let us construct the $\psi$ - homomorphism in the following diagram, with
$\widetilde{\partial}(A)=\partial(A)$, $\widetilde{\partial}(G)=id_G$, $\widetilde{\partial}(a,b)=\partial(a)\cdot b$
for $a\in A(\mathbb{Q}_p), b\in G(\mathbb{Q}_p)$.
$$
\xymatrix{
{Ker d_0} \ar[r] \ar@/^/[dr]^{\varphi} \ar@/^1pc/[rr]^{d_1} & F_u(X \cup Y)  \ar[r]^{d_0} \ar[d]^{\psi} & F_u(X) \ar[d]^{f}
\\
Y \ar[r]^{\nu} \ar[ru] & A\leftthreetimes G \ar[r]^{\widetilde{\partial}} & G}
$$
The semidirect product $A \leftthreetimes G$ \cite[7i]{Milne} is a prounipotent group
(prounipotence follows from the fact that a semidirect product of nilpotent Lie algebras is a nilpotent Lie algebra),
since it is constructed from a prounipotent action, which is given in the pre-crossed module $(A,G,\partial)$.
The fact that $\widetilde{\partial}$ is a homomorphism follows by direct computation using the fact that
for $\partial$ the property СМ 1) from Definition \ref{d1} holds.

Set $\psi(\mathbb{Q}_p)$ on $X$ as the composite $f(\mathbb{Q}_p)d_1(\mathbb{Q}_p)$, and set $\psi(\mathbb{Q}_p)$ on $Y$
equal to $\nu(Y).$ The universal property of $F_u(X\cup Y)$ yields a homomorphism of prounipotent groups
$\psi: F_u(X\cup Y)\rightarrow A\leftthreetimes G.$ Put $\varphi:Kerd_0\rightarrow A\leftthreetimes G$
equal to $\varphi=\psi\mid_{Kerd_0}$, i.~e. restriction of $\psi$ to $Kerd_0$.
\end{proof}

Constructing the prounipotent crossed module of the presentation $$ (C_u,F_u(X),\overline{d_1})$$
from the prounipotent presentation is standard and made by compilation of
the constructions from \cite{BH,Mikh2012}:
$$\frac{Kerd_0}{P_u} \xrightarrow{\overline{d_1}} F_u(X),$$ here $P_u=<{}^{d_1(b)}a=bab^{-1}>$
is the normal Zariski closure of the subgroup of Peifer commutators.
By the homotopy groups of a simplicial group $F_{\bullet}$ one calls the homology groups of
its Moore complex $(NF_n=\cap^{n-1}_{i=0}kerd_i, d_n|_{NF_n})$.
In \cite[5.7]{BL} (\cite[4.3.8]{Por},\cite[page 23]{Mikh2012}) it is proved that in any
simplicial group degenerate in dimension two and in particular in the simplicial group $F_{\bullet}$ \eqref{2},
arising from the presentation \eqref{eq1}, one has the coincidence of subgroups $Im\overline{d_2}=P_u,$
where $\overline{d_2}:NF_2\rightarrow NF_1,$ and hence $\pi_1(F_{\bullet})\cong ker\overline{d_1}$
(recall that in the discrete case $\pi_1(F_{\bullet})$ coincides with $\pi_2$
of the standard 2-complex of the presentation \eqref{eq1} of the group).

\begin{lemma} \label{l3}
Assume we are given a prounipotent finite type presentation \eqref{5}, then one has the
isomorphism of prounipotent crossed modules arising from coincidence of subgroups $P_u$ and $[Kerd_0, Kerd_1]$ in
$Kerd_0$,
$$ (C_u,F_u(X),\overline{d_1})=(\frac{Kerd_0}{P_u}, F_u(X),
\overline{d_1})\cong(\frac{Kerd_0}{[Kerd_0, Kerd_1]},F_u(X),\overline{d_1}).$$
\end{lemma}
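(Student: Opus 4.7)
The plan is to prove the equality of normal Zariski-closed subgroups $P_u = [Kerd_0, Kerd_1]$ inside $Kerd_0$. Once this is established, the asserted isomorphism of prounipotent crossed modules is automatic, since both sides share the base group $F_u(X)$ and the structure map $\overline{d_1}$; only the numerator of the quotient changes. I would split the proof into the two inclusions, carrying out all identities at the level of $A$-points for a general $\mathbb{Q}_p$-algebra $A$ (where they make sense functorially) and then passing to Zariski closures.

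\emph{Step 1 (Peifer generators lie in $[Kerd_0,Kerd_1]$).} For $A$-points $a,b\in Kerd_0(A)$, put $c:=s_0(d_1(b))\,b^{-1}$. The simplicial identity $d_1 s_0=\mathrm{id}$, together with $d_1(b)\in F_u(X)$ and $b\in Kerd_0$, gives $d_1(c)=d_1(b)\,d_1(b)^{-1}=1$, so $c\in Kerd_1(A)$. A direct computation in $F_u(X\cup Y)(A)$ yields
$$
{}^{d_1(b)}a\cdot(bab^{-1})^{-1}\;=\;s_0(d_1(b))\,a\,s_0(d_1(b))^{-1}\cdot b a^{-1}b^{-1}\;=\;[\,c,\;bab^{-1}\,],
$$
with $c\in Kerd_1$ and $bab^{-1}\in Kerd_0$. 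This identity is a morphism of prounipotent group schemes, hence natural in $A$. Consequently every Peifer generator lies in $[Kerd_0,Kerd_1]$, and taking Zariski closures gives $P_u\subseteq [Kerd_0,Kerd_1]$.

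\emph{Step 2 (commutators of $Kerd_0$ and $Kerd_1$ lie in $P_u$).} Here I would transport the Brown--Loday / Mutlu--Porter identity used in the classical case. In the simplicial group $F_\bullet$ of \eqref{5}, which is degenerate in dimensions greater than two, one has $Im\overline{d_2}=[Kerd_0,Kerd_1]$, and the discussion just preceding the lemma records $Im\overline{d_2}=P_u$. These are word-level identities in faces and degeneracies, proved in the discrete and pro-$p$ cases in \cite[5.7]{BL}, \cite[4.3.8]{Por}, and \cite[page 23]{Mikh2012}. Because every commutator and conjugation involved is a regular morphism of prounipotent group schemes, the same word identities express an arbitrary $[a,c]$ with $a\in Kerd_0(A)$, $c\in Kerd_1(A)$ as a product of Peifer-type generators in $F_u(X\cup Y)(A)$. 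Passing to Zariski closures yields $[Kerd_0,Kerd_1]\subseteq P_u$.

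\emph{Main obstacle.} The algebraic content of both steps is a literal copy of the classical simplicial-group identity; the subtlety lies in the Zariski-closure bookkeeping inherent to the prounipotent setting. One has to verify that commutation and conjugation are morphisms in the category of affine group schemes over $\mathbb{Q}_p$, so that both $P_u$ and $[Kerd_0,Kerd_1]$ are genuinely Zariski-closed normal subgroups of $Kerd_0$, and that the equality of their defining generating sets at all $A$-points forces equality of the closed subgroups themselves. Granted this functorial framework, Step~1 is a short check and Step~2 reduces to the known Brown--Loday identity applied within $F_u(X\cup Y)$.
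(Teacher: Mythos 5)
Your proposal is correct, and its first half coincides with the paper's argument: the paper's own proof also rests on the elementary substitution showing that a Peifer commutator ${}^{d_1(b)}a\cdot(bab^{-1})^{-1}$ equals a commutator $[c,\,bab^{-1}]$ with $c=s_0d_1(b)\,b^{-1}\in Ker\,d_1$ (the paper writes the conjugate form $[x^{-1}s_0d_1(x),y]$, which generates the same normal subgroup). Where you diverge is the reverse inclusion. The paper closes the loop with a second elementary observation: \emph{every} element of $Ker\,d_1$ is of the form $x^{-1}s_0d_1(x)$ for some $x\in Ker\,d_0$ (given $c\in Ker\,d_1$, take $x=c^{-1}s_0d_0(c)$, which lies in $Ker\,d_0$ and maps to $c$). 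With that surjectivity, the identity of Step~1 can simply be read backwards: any $[c,a]$ with $c\in Ker\,d_1$, $a\in Ker\,d_0$ is literally a Peifer commutator, so $[Ker\,d_0,Ker\,d_1]\subseteq P_u$ with no further machinery. You instead route this inclusion through the Brown--Loday lemma, combining $Im\,\overline{d_2}=P_u$ with $Im\,\overline{d_2}=[Ker\,d_0,Ker\,d_1]$ for the $2$-degenerate simplicial group. This works, but it imports a heavier result (and requires you to justify its prounipotent analog, a transport the paper only carries out informally), whereas the paper's converse is a one-line word computation in the same spirit as your Step~1. Your attention to the Zariski-closure bookkeeping --- that the identities are morphisms of group schemes natural in $A$, so equality of generating sets at all $A$-points forces equality of the closed normal subgroups --- is a point the paper leaves implicit, and is worth making explicit as you do.
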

\begin{proof}
The proof consists of two steps:

1) using elementary substitutions we see that Peifer commutators are representable in the form of
commutators $[x^{-1}s_0d_1(x),y],$ where $x,y\in Ker(d_0)$;

2) elements of $Ker(d_1)$ are exactly the elements  $x^{-1}s_0d_1(x),$ where $x\in Ker(d_0).$

For details, see \cite[p.23]{Mikh2012}.
\end{proof}

Below, by the crossed module of a prounipotent presentation \eqref{5} we shall understand
the crossed module from Lemma \ref{l3}. Recall the notation $$R_u=im(\overline{d_1}).$$

\begin{definition} \label{d3}
Let $A$ be a complete linearly compact Hopf algebra over a field $k$ or a topological group
(the field is considered with discrete topology).
By a left (or right) complete topological $A$-module we shall call a linearly compact topological
$k$- vector space $M$ with a structure of $A$-module
such that the corresponding $k$-linear action $ A\widehat{\otimes} M\rightarrow M$
is continuous. We assume that the topology on $M$ is given by a fundamental system of neighborhoods of zero
$M=M^0\supseteq M^1 \supseteq M^2\supseteq    \ldots,$ where $M^j$ are topological $A$-submodules in
$M$ of finite codimension (finiteness of type) and $M\cong \varprojlim M/M^j.$
By a homomorphism of topological $A$-modules one calls a continuous $A$- module homomorphism.

If the filtration $M^j$ admits a compression such that for each $j$
the compression quotients $M^{j_i}_i/M^{(j+1)_{i}}_{i+1}$ are trivial $A$- modules (i.~e. the action of $A$ is trivial),
then we shall call such topological $A$-module prounipotent.
 \end{definition}

In contrast to \cite{Hain1992}, in the definition of a topological module we always require ``finiteness of type''.
If $A$ is a complete linearly compact Hopf algebra, then the corresponding category of
topological $A$-modules is Abelian \cite[Theorem 3.4]{Hain1992}.

Let $G$ be a finitely generated group and $k$ a field $char(k)=0$, and let $G_u$ be its prounipotent completion.
The lower central series $C_n(G_u)$ of the prounipotent group $G_u=\varprojlim G_{\alpha}$ ($G_{\alpha}$
are unipotent groups)
is defined by the rule $C_n(G_u)=\varprojlim C_n(G_{\alpha})$.
Since $G_{\alpha}$ are linear algebraic groups, there is an opportunity to define
$C_n(G_{\alpha})$ as subgroups of the lower central series of the linear algebraic group
(details see in \cite{HM2003}). Denote the $n$-th element of the lower central series of a group $G$ by $L_n(G)$.

The prounipotent completion of any group is the inverse limit of unipotent completions of its
nilpotent torsion free quotients $$G^{\wedge}_u = \varprojlim_n (G/D_n)^{\wedge}_u\cong\varprojlim_n (G_u/C_n(G_u)),$$
where \cite[Chapter 11, Theorem 1.10]{Pas} $D_n=\sqrt{L_n(G)}=\{x\in G|x^n\in L_n(G), n\geq1\}.$

By analogy with discrete and pro-$p$ cases, on the rational points of the relations module
$\overline{R_u}(\mathbb{Q}_p)=R_u/[R_u,R_u](\mathbb{Q}_p)\cong R_u(\mathbb{Q}_p)/[R_u(\mathbb{Q}_p),R_u(\mathbb{Q}_p)]$
of a prounipotent presentation \eqref{5} there is a structure of topological
$\mathcal{O}(F_u)^*$- module with the filtration
$$\overline{R_u}(\mathbb{Q}_p)^j=\frac{[R_u(\mathbb{Q}_p),R_u(\mathbb{Q}_p)C_j(F_u(X))(\mathbb{Q}_p)]}
{[R_u(\mathbb{Q}_p),R_u(\mathbb{Q}_p)]}.$$
Similarly, for $\overline{C_u}(\mathbb{Q}_p)$ the filtration is given using the rule
$$\overline{C_u}(\mathbb{Q}_p)^j=\frac{[kerd_0(\mathbb{Q}_p),kerd_0(\mathbb{Q}_p)C_j(F_u(X\cup Y))(\mathbb{Q}_p)]}
{[kerd_0(\mathbb{Q}_p),kerd_0(\mathbb{Q}_p)kerd_1(\mathbb{Q}_p)]}.$$

\begin{proposition} \label{p2}
Assume we are given a prounipotent finite type presentation \eqref{5}, then
$\overline{C}_u(\mathbb{Q}_p)$ and $\overline{R}_u(\mathbb{Q}_p)$
are prounipotent topological $\mathcal{O}(G_u)^*$- modules
with the filtrations $\overline{C_u}(\mathbb{Q}_p)^j$ and $\overline{R_u}(\mathbb{Q}_p)^j$.
\end{proposition}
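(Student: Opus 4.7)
The plan is to check, separately for $\overline{R_u}(\mathbb{Q}_p)$ and $\overline{C_u}(\mathbb{Q}_p)$, the four conditions of Definition \ref{d3}: stability of each $\overline{R_u}(\mathbb{Q}_p)^j$ (resp.\ $\overline{C_u}(\mathbb{Q}_p)^j$) under the $\mathcal{O}(G_u)^*$-action, finite codimension of each step, completeness under the inverse limit, and existence of a compression with trivial graded pieces.

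For the submodule property I would use that $R_u$ is normal in $F_u(X)$ and each $C_j(F_u(X))$ is characteristic, so $F_u(X)$-conjugation preserves $[R_u, R_u C_j(F_u(X))]$; moreover, in $\overline{R_u}(\mathbb{Q}_p)=R_u/[R_u,R_u]$ the subgroup $R_u$ acts trivially by standard commutator identities, so the action descends to a $G_u$-action preserving every $\overline{R_u}(\mathbb{Q}_p)^j$. For $\overline{C_u}(\mathbb{Q}_p)$ the parallel argument uses that $\ker d_0$ is normal in $F_u(X\cup Y)$, that the $F_u(X)$-action is given via $s_0$, and that the Peifer identity (through Lemma \ref{l3}) forces the triviality of the action of $R_u$.

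Next, the canonical isomorphism
\[
\overline{R_u}(\mathbb{Q}_p)/\overline{R_u}(\mathbb{Q}_p)^j\;\cong\; R_u(\mathbb{Q}_p)/[R_u(\mathbb{Q}_p), R_u(\mathbb{Q}_p) C_j(F_u(X))(\mathbb{Q}_p)]
\]
presents the $j$-th quotient as the $C_j(G_u)$-coinvariants of $\overline{R_u}(\mathbb{Q}_p)$. By the rational comparison of the lower central series with the augmentation filtration, the closed span of $C_j(F_u(X))-1$ equals the $j$-th power of the augmentation ideal in the completed group algebra of $F_u(X)$, so this coinvariant space is $\overline{R_u}(\mathbb{Q}_p)/I_{G_u}^j\overline{R_u}(\mathbb{Q}_p)$, where $I_{G_u}$ is the augmentation ideal of $\mathcal{O}(G_u)^*$. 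Since $\mathcal{O}(G_u)^*/I_{G_u}^j$ is finite-dimensional and $\overline{R_u}(\mathbb{Q}_p)$ is topologically generated over $\mathcal{O}(G_u)^*$ by the finitely many classes of the defining relations in $Y$, each step has finite codimension, and completeness reduces to the prounipotence of $R_u$ together with $\bigcap_j C_j(F_u(X))=\{1\}$. The analogous identifications hold for $\overline{C_u}(\mathbb{Q}_p)$ with $F_u(X)$ replaced by $F_u(X\cup Y)$.

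For continuity and prounipotence the argument goes together. On each finite-dimensional layer the induced $G_u$-representation factors through the unipotent quotient $G_u/C_j(G_u)$, so it is algebraic and unipotent; passing to the inverse limit gives continuity of $\mathcal{O}(G_u)^*\widehat{\otimes}\overline{R_u}(\mathbb{Q}_p)\to\overline{R_u}(\mathbb{Q}_p)$. Because any unipotent action on a finite-dimensional $\mathbb{Q}_p$-space admits a full invariant flag with trivial graded action, concatenating these flags furnishes the required compression of $\{\overline{R_u}(\mathbb{Q}_p)^j\}$, and analogously for $\{\overline{C_u}(\mathbb{Q}_p)^j\}$. The main obstacle, I expect, will be verifying that taking $\mathbb{Q}_p$-points commutes with the Zariski-closed commutator subgroups that define the filtrations, so that the identifications with $I_{G_u}^j\overline{R_u}(\mathbb{Q}_p)$ (and its $C_u$-analog) hold on the nose rather than only up to an inclusion and Zariski closure.
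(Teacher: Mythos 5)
Your proposal is correct and follows essentially the same route as the paper's own (much terser) proof: the Peifer identity forces $R_u$ to act trivially so the conjugation action descends to an $\mathcal{O}(G_u)^*$-module structure, finite codimension comes from the epimorphism $(\mathcal{O}(G_u)^*)^{|Y|}\twoheadrightarrow\overline{C_u}(\mathbb{Q}_p)$ given by the finite set $Y$, separatedness comes from $\bigcap_j C_j=1$, and prounipotence of the action comes from prounipotence of the ambient free group. The technical point you flag at the end (commuting $\mathbb{Q}_p$-points with Zariski-closed commutator subgroups) is handled in the paper by the identification $R_u/[R_u,R_u](\mathbb{Q}_p)\cong R_u(\mathbb{Q}_p)/[R_u(\mathbb{Q}_p),R_u(\mathbb{Q}_p)]$ stated just before the proposition, resting on the theorem on commutant closure for (pro)unipotent groups in characteristic $0$.
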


\begin{proof} Let us give the proof for $\overline{C}_u(\mathbb{Q}_p),$ for
$\overline{R}_u(\mathbb{Q}_p)$ the argument is completely similar.

The fact that $\cap C_j(F_u(X\cup Y))(\mathbb{Q}_p)=1$ implies $\cap \overline{C_u}(\mathbb{Q}_p)^j=0.$
CM 2) implies that $R_u$ acts on $\overline{C}_u$ trivially, and hence $\overline{C}_u(\mathbb{Q}_p)$
is a $\mathcal{O}(G_u)^*$-module (for details see \cite[2.4]{BH} or \cite[3.2.5]{Por}).
Finite dimension of quotients $\overline{C_u}(\mathbb{Q}_p)^j/\overline{C_u}(\mathbb{Q}_p)^{j+1}$
follows from finiteness of the presentation.
Indeed, since $\overline{C_u}(\mathbb{Q}_p),$ as an Abelian prounipotent group,
is generated by the elements $\cup_{y\in Y} G_u(\mathbb{Q}_p)\cdot y$
(the finite set $Y$ corresponds to the relations of the presentation \eqref{5}), one has an
epimorphism of $\mathcal{O}(G_u)^*$-modules $(\mathcal{O}(G_u)^*)^{|Y|}\twoheadrightarrow\overline{C_u}(\mathbb{Q}_p)$.
The action by conjugation is prounipotent since $F_u(X\cup Y)$ is a prounipotent group.
\end{proof}

In \cite[A.2]{HM2003} it is proved that the continuous prounipotent completion of a
finitely generated free pro-$p$-group with a basis $X$
and the prounipotent completion of the discrete free group contained in it with the same basis
(and the induced pro-$p$-topology) are naturally isomorphic.
This follows from the fact that any homomorphism from a finitely generated pro-$p$-group into
the group of $\mathbb{Q}_p$-points of a unipotent group is always continuous \cite[Lemma A.7.]{HM2003}.
Hence for any finite $X$ we have $F(X)^{\wedge}_w=F_u(X),$ where $F_u(X)$ is the free
prounipotent group \cite{Vez}. Since $d_0, d_1$ are epimorphisms, $d_1(kerd_0)$ is a normal subgroup in $F_u$
the quotient map $F_u\rightarrow G_u=F_u/d_1(kerd_0)$ is a coequalizer \cite[3.3]{Mac}
of the diagram $d_0, d_1$. The functor of continuous prounipotent completion has
a right adjoint functor ($\mathbb{Q}_p$-points of a prounipotent group) and hence preserves
coequalizers \cite[5.5]{Mac}, and hence for finite type presentations one has an
isomorphism (similar idea can be found in \cite[p.284]{Qui5})
\begin{equation}
G^{\wedge}_w\cong G_u, \label{eq3}
\end{equation}
where $G^{\wedge}_w$ is the continuous prounipotent completion of the pro-$p$-group from
presentation \eqref{2}, here $G_u$
is the prounipotent group from \eqref{5} and the presentation \eqref{5} is obtained from \eqref{2}
by means of prounipotent completion of pro-$p$-groups $F(X)$ and $F(X\cup Y)$.

The following results describe the structure of Abelianization of continuous prounipotent completions
of the crossed modules of $QR$ pro-$p$-presentations announced in \cite{Mikh2014}.

\begin{lemma} \label{l4}
Let \eqref{eq1} be a finite $QR$ pro-$p$-presentation, then using the isomprphism
$\overline{R^{\wedge}_w}(\mathbb{Q}_p)\cong \overline{R} \widehat{\otimes}\mathbb{Q}_p=
\varprojlim_n R/[R,R \mathcal{M}_n]\otimes \mathbb{Q}_p$ of Abelian prounipotent groups,
$\overline{R^{\wedge}_w}(\mathbb{Q}_p)$ is endowed with a structure of topological $G$-module.
\end{lemma}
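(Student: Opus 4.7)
The plan is to reduce the lemma to Proposition~\ref{p2} by identifying $R^{\wedge}_w$ with the prounipotent normal subgroup $R_u = d_1(\ker d_0) \subset F_u(X)$ of the associated prounipotent presentation~\eqref{5}, and then to match the two filtrations explicitly. The basic inputs are $F^{\wedge}_w \cong F_u(X)$ (recalled just before \eqref{eq3}), the isomorphism $G^{\wedge}_w \cong G_u$ from \eqref{eq3}, and the fact that continuous prounipotent completion, being a left adjoint, preserves coequalizers.

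First, I would argue that the short exact sequence $1 \to R \to F \to G \to 1$ passes through continuous prounipotent completion to give $R^{\wedge}_w \cong R_u$ as prounipotent normal subgroups of $F^{\wedge}_w \cong F_u(X)$, so that $\overline{R^{\wedge}_w}(\mathbb{Q}_p) \cong \overline{R_u}(\mathbb{Q}_p)$. Proposition~\ref{p2} then endows the right-hand side with the structure of a topological $\mathcal{O}(G_u)^*$-module with the explicit filtration $\overline{R_u}(\mathbb{Q}_p)^j = [R_u,\, R_u C_j(F_u(X))(\mathbb{Q}_p)]/[R_u(\mathbb{Q}_p),R_u(\mathbb{Q}_p)]$. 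Using the canonical continuous map $G \to G_u(\mathbb{Q}_p) = G^{\wedge}_w(\mathbb{Q}_p)$ and the induced map of complete linearly compact Hopf algebras $\widehat{\mathbb{Z}_p}G \to \mathcal{O}(G_u)^*$, the $\mathcal{O}(G_u)^*$-action restricts to a continuous $G$-action, yielding the topological $G$-module structure on $\overline{R^{\wedge}_w}(\mathbb{Q}_p)$.

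Next, to obtain the explicit description $\overline{R^{\wedge}_w}(\mathbb{Q}_p) \cong \varprojlim_n R/[R, R\mathcal{M}_n] \otimes \mathbb{Q}_p$, I would match the filtrations at each finite level. The Zassenhaus $p$-filtration $\mathcal{M}_n$ on the free pro-$p$-group $F$ corresponds, under rationalization via $F \to F_u(X)(\mathbb{Q}_p)$, to the lower central series $C_n(F_u(X))$ (the Quillen--Lazard comparison for finitely generated free pro-$p$-groups). Thus the $n$-th filtration quotient of $\overline{R_u}(\mathbb{Q}_p)$ identifies with the $\mathbb{Q}_p$-rationalization of $R/[R, R\mathcal{M}_n]$. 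At this point the $QR$-hypothesis enters: by condition~(i) of the definition, the finite $\mathbb{Z}_p$-module $R/[R, R\mathcal{M}_n]$ is $p$-torsion-free, so $\otimes\mathbb{Q}_p$ is injective and the comparison at each level is an isomorphism of finite-dimensional $\mathbb{Q}_p$-vector spaces. Passing to the inverse limit, which is exact on systems with torsion-free terms of bounded type, completes the identification and also shows it respects the respective filtrations, hence yields an isomorphism of topological $G$-modules.

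The main obstacle is the first step, namely the identification $R^{\wedge}_w \cong R_u$ together with the Quillen--Lazard matching of $\mathcal{M}_n$ with $C_n(F_u(X))$ level by level. Without the $QR$-hypothesis, torsion in the quotients $R/[R, R\mathcal{M}_n]$ would be annihilated by $\otimes\mathbb{Q}_p$ and the comparison at level~$n$ would fail to be an isomorphism; so $QR$ is precisely the input that ensures no information is lost in the rationalization, and that $\overline{R}\,\widehat{\otimes}\,\mathbb{Q}_p$ coincides with $\overline{R^{\wedge}_w}(\mathbb{Q}_p)$ as a topological $G$-module, bridging Proposition~\ref{p2} (about $R_u$) and the relation module of the original pro-$p$-presentation~\eqref{eq1}.
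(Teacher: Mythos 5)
Your first step --- identifying $R^{\wedge}_w$ with $R_u=d_1(\ker d_0)$ by ``passing the short exact sequence through continuous prounipotent completion'' --- is a genuine gap, and it is the load-bearing step of your whole argument. The completion functor is a left adjoint and therefore preserves coequalizers (which is exactly why $G^{\wedge}_w\cong G_u$ holds in \eqref{eq3}), but it does not preserve kernels or normal subgroups: $R^{\wedge}_w$ is universal for \emph{all} continuous Zariski-dense homomorphisms of $R$ (with the topology induced from $F$) into prounipotent groups, not only those factoring through $R\hookrightarrow F_u(X)(\mathbb{Q}_p)$, so a priori one only gets a canonical map $R^{\wedge}_w\to R_u$, namely the map inducing $\tau$ in diagram \eqref{eq2}. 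Whether $\tau$ is an isomorphism is precisely the nontrivial comparison that Theorem \ref{t01}, Corollary \ref{s0} and Proposition \ref{p01} are built to study (in Proposition \ref{p01} the analogous statement for $\kappa$ requires the extra hypothesis $G\hookrightarrow G^{\wedge}_w(\mathbb{Q}_p)$); taking it for granted here both begs that question and proves a statement about $\overline{R_u}$ rather than about $\overline{R^{\wedge}_w}$. Your appeal to a ``Quillen--Lazard'' matching of the Zassenhaus filtration $\mathcal{M}_n$ with the lower central series $C_n(F_u(X))$ is also asserted rather than proved, and is not how the paper proceeds.

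The paper's proof avoids $R_u$ entirely and verifies the universal property of the continuous prounipotent completion directly for $\overline{R}\widehat{\otimes}\mathbb{Q}_p$: it reduces to abelian unipotent targets, observes that a continuous Zariski-dense $\phi:\overline{R}\to U(\mathbb{Q}_p)\cong\mathbb{Q}_p^l$ has image $\cong\mathbb{Z}_p^l$ by compactness, and then uses quasirationality --- which makes each $R/[R,R\mathcal{M}_k]$ a free abelian pro-$p$-group --- to choose a convergent basis compatible with $\phi$ and factor $\phi$ through a finite level $R/[R,R\mathcal{M}_k]\otimes\mathbb{Q}_p$. The $G$-module structure is then obtained not by restricting an $\mathcal{O}(G_u)^*$-action from Proposition \ref{p2}, but by exhibiting the kernels $\ker(\gamma_n\widehat{\otimes}\mathbb{Q}_p)\cong\varprojlim_{k\geq n}[R,R\mathcal{M}_n]/[R,R\mathcal{M}_k]\otimes\mathbb{Q}_p$ as a finite-type $G$-filtration, using the exactness (again from quasirationality) of the level-$k$ sequences after $\otimes\,\mathbb{Q}_p$ together with the vanishing of $\varprojlim^1$ for surjective systems. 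You correctly located where the $QR$-hypothesis enters (torsion-freeness of the $R/[R,R\mathcal{M}_n]$), but the argument needs to be rebuilt on the universal-property verification rather than on the unproven identification $R^{\wedge}_w\cong R_u$.
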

\begin{proof}
The idea of construction is contained in \cite[Theorem 1]{Mikh2014},
let us expose the construction completely. Let us check that the topological vector space
$\overline{R} \widehat{\otimes}\mathbb{Q}_p$, considered as the group of $\mathbb{Q}_p$-points of
an Abelian prounipotent group, has the universal property inherent to the group of $\mathbb{Q}_p$-points
of $\overline{R^{\wedge}_w}$, with respect to continuous (in the pro-$p$-topology induced from $F(X)$)
Zariski-dense homomorphisms of $R$ into Abelian prounipotent groups.
Since each prounipotent group is the projective limit of a surjective projective system
of unipotent groups, it suffices to check the universal property for homomorphisms into Abelian
unipotent groups.

Let $\phi : \overline{R}\rightarrow U(\mathbb{Q}_p)\cong \mathbb{Q}_p^l$ be a continuous in
pro-$p$-topology and Zariski dense homomorphism.
Since $\phi (\overline{R})$ is dense in $\mathbb{Q}_p^l$ in the Zariski topology and $\mathbb{Z}_p$
is a principal ideal domain, then, by compactness of $\overline{R}$, we obtain
$\phi(\overline{R})\cong \mathbb{Z}_p^l$. Let
$\gamma_n:\mathbb{Z}_p^l\twoheadrightarrow \mathbb{Z}_p^l/p^n\mathbb{Z}_p^l=\mathbb{Z}^l/p^n\mathbb{Z}^l,
W=ker(\gamma_n\circ\phi).$

By construction of the topology on $R$, we can find $k\in \mathbb{N}$ such that
$R \cap \mathcal{M}_k\subseteq W$ \cite{Koch}, but $[R,\mathcal{M}_k]\subseteq R\cap \mathcal{M}_k.$
$R/[R,R\mathcal{M}_k]$ is a free Abelian pro-$p$-group (due to quasirationality), hence we can
define a homomorphism of free Abelian pro-$p$-groups
$ \nu_k:R/[R,R\mathcal{M}_k]\twoheadrightarrow \phi(R)\cong\mathbb{Z}_p^l$ by the formula
$\nu_k(b_i)=\phi(a_i)$ on a free basis $b_i\in R/[R,R\mathcal{M}_k], i\in I$.
Let us describe the construction of this basis in more detail.

$$\xymatrix{
  \overline{R} \ar[d]_{\gamma_k} \ar[r]^{\phi}
                & \phi(R) \ar[d] \\
  \frac{R}{[R,R\mathcal{M}_k]} \ar@{.>}[ur]|-{\kappa_k} \ar[r]_{\nu_k}
                & \frac{\phi(R)}{\Phi(\phi(R))}}$$

1) in $\overline{R}$ there exists a convergent basis $\{a_j\},j\in J$ \cite[2.4.4]{ZR};

2) if $\{\widetilde{a}_i\},i\in I, |I|=l$ is a basis in $\phi(R)$, then $\{a_j\},j\in J, J\supseteq I$
can be chosen so that $\phi(a_i)=\widetilde{a}_i,\forall i\in I$ \cite[7.6.10]{ZR}, and the rest elements
of the basis generate a subgroup orthogonal to $\langle\{a_i\},i\in I\rangle$ \cite[3.3.8(c)]{ZR};

3) then we set $\gamma_k(a_i)=b_i$ as part of the system of free generators $R/[R,R\mathcal{M}_k]$,
because they are linearly independent in $\phi(R)/\Phi(\phi(R)),$ where $\Phi(\phi(R))$
is the Frattini subgroup in $\phi(R)$, and hence in $R/R^p[R,R\mathcal{M}_k]$;

4) now $\kappa_k$ is defined in the unique way putting $\kappa_k(b_i)=\widetilde{a}_i, i\in I,$
and $\kappa_k(b_{\lambda})=0$ on the rest of free generators of $R/[R,R\mathcal{M}_k]$.

We construct
$\psi_k : \overline{R}\widehat{\otimes}\mathbb{Q}_p\rightarrow U (\mathbb{Q}_p)= \mathbb{Q}_p^l,$
extending $\phi,$ by the rule
$\psi_k=(\kappa_k\otimes id)\circ pr_k,$
where
$pr_k: \overline{R} \widehat{\otimes} \mathbb{Q}_p \twoheadrightarrow  \frac{R}{[R,R\mathcal{M}_k]}\otimes \mathbb{Q}_p$
is the projection. Let
$$ker(\gamma_n)=ker(\frac{R}{[R,R]}\xrightarrow{\gamma_n} \frac{R}{[R,R\mathcal{M}_n]}),$$
$$ker(\gamma_n\widehat{\otimes}\mathbb{Q}_p)=ker(\frac{R}{[R,R]}\widehat{\otimes}\mathbb{Q}_p
\xrightarrow{\gamma_n\widehat{\otimes} id} \frac{R}{[R,R\mathcal{M}_n]}\otimes \mathbb{Q}_p).$$

Now check that $ker(\gamma_n\widehat{\otimes}\mathbb{Q}_p)$ provide a $G$-filtration,
which will be of finite type, due to finiteness of considered presentations.
First of all note that $ker(\gamma_n\widehat{\otimes}\mathbb{Q}_p)\cong \varprojlim_{k\geq n }
[R,R\mathcal{M}_n]/[R,R\mathcal{M}_k]\otimes \mathbb{Q}_p.$
 Indeed, for each $k\geq n$ we have a short exact sequence
$$0\rightarrow \frac{[R,R\mathcal{M}_n]}{[R,R\mathcal{M}_k]}\rightarrow \frac{R}{[R,R\mathcal{M}_k]}\rightarrow
\frac{R}{[R,R\mathcal{M}_n]}\rightarrow 0$$
of finitely generated free Abelian pro-$p$-groups (due to quasirationality), hence
the following sequence of finite dimensional vector spaces will be also exact:
$$0\rightarrow \frac{[R,R\mathcal{M}_n]}{[R,R\mathcal{M}_k]}\otimes \mathbb{Q}_p\rightarrow \frac{R}
{[R,R\mathcal{M}_k]}\otimes \mathbb{Q}_p\rightarrow \frac{R}{[R,R\mathcal{M}_n]}\otimes \mathbb{Q}_p\rightarrow 0,$$
but since on the right we have a homomorphism of $G$-modules, on the left we have also a $G$-module.
Now, since $\varprojlim_k^1 [R,R\mathcal{M}_n]/[R,R\mathcal{M}_k]\otimes \mathbb{Q}_p=0$,
we have an exact sequence $$0\rightarrow\varprojlim_{k\geq n } \frac{[R,R\mathcal{M}_n]}{[R,R\mathcal{M}_k]}\otimes
\mathbb{Q}_p\rightarrow \frac{R}{[R,R]}\widehat{\otimes}\mathbb{Q}_p\rightarrow \frac{R}{[R,R\mathcal{M}_n]}\otimes
\mathbb{Q}_p\rightarrow 0,$$
which implies the required statement.
\end{proof}

\begin{lemma} \label{l5}
Assume we are given a finite type pro-$p$-presentation \eqref{eq1}, then $\mathbb{Q}_p$- points of the
Abelianization $\overline{C^{\wedge}_w}$ of the prounipotent group
$$C^{\wedge}_w:=(\frac{kerd_0}{[kerd_0,kerd_1}])^{\wedge}_w$$
have a structure of topological finite type $G$-module, given by the decomposition
$\overline{C^{\wedge}_w}(\mathbb{Q}_p)\cong\varprojlim_k(\frac{kerd_0}
{[kerd_0,kerd_1Kerd_0\mathcal{M}_k]}\otimes\mathbb{Q}_p)
\cong\varprojlim_k(\mathbb{Q}_p[\frac{G}{\mathcal{M}_k}])^{|Y|}.$

\end{lemma}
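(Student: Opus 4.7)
The plan is to adapt the universal-property argument of Lemma~\ref{l4}, replacing the role played by quasirationality (which gave freeness as an Abelian pro-$p$-group) with the much stronger fact that $\ker d_0/[\ker d_0,\ker d_1]$ is a \emph{free} topological $\mathbb{Z}_pG$-module on the set $Y$. In particular no quasirationality hypothesis is needed here.

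First I would recall from Lemma~\ref{l3} that the crossed module of the presentation is $(\ker d_0/[\ker d_0,\ker d_1], F(X), \overline{d_1})$ and that, in the pro-$p$-setting, $\ker d_0 \cong F(Y \times F(X))$. The standard identification of the abelianization of a free pre-crossed module modulo Peifer commutators (see \cite{BH}, \cite[3]{Por}, \cite[Proposition 12]{Mikh2012}) gives a topological isomorphism of Abelian pro-$p$-groups
$$\frac{\ker d_0}{[\ker d_0,\ker d_1]} \;\cong\; (\mathbb{Z}_p G)^{|Y|},$$
the $G$-action on the right-hand side being by left multiplication induced from conjugation by $F(X)$ (which descends through $G=F/R$ by the Peifer identity CM~2)). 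Under this isomorphism the closed subgroup $[\ker d_0, \ker d_1 \cdot \ker d_0 \cdot \mathcal{M}_k]$ corresponds precisely to $(J_k)^{|Y|}$, where $J_k$ is the closed ideal in $\mathbb{Z}_pG$ generated by the image of the augmentation ideal of $\mathbb{Z}_p\mathcal{M}_k$. Since $\mathbb{Z}_pG/J_k \cong \mathbb{Z}_p[G/\mathcal{M}_k]$, one obtains
$$\frac{\ker d_0}{[\ker d_0,\ker d_1 \cdot \ker d_0 \cdot \mathcal{M}_k]} \;\cong\; (\mathbb{Z}_p[G/\mathcal{M}_k])^{|Y|}.$$

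Next I would pass to the prounipotent completion by tensoring with $\mathbb{Q}_p$. Exactly as in Lemma~\ref{l4}, I verify that
$$\varprojlim_k\Bigl(\frac{\ker d_0}{[\ker d_0,\ker d_1\cdot \ker d_0\cdot \mathcal{M}_k]}\otimes\mathbb{Q}_p\Bigr) \;\cong\; \varprojlim_k(\mathbb{Q}_p[G/\mathcal{M}_k])^{|Y|}$$
satisfies the universal property of $\overline{C^{\wedge}_w}(\mathbb{Q}_p)$ with respect to continuous (in the induced pro-$p$-topology) Zariski-dense homomorphisms from $\ker d_0/[\ker d_0,\ker d_1]$ into Abelian unipotent $\mathbb{Q}_p$-groups. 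Indeed, such a homomorphism $\phi$ into some $\mathbb{Q}_p^l$ lands by compactness inside $\mathbb{Z}_p^l$, and its composition with the projection to $\mathbb{Z}_p^l/p^n\mathbb{Z}_p^l$ factors through some $(\mathbb{Z}_p[G/\mathcal{M}_k])^{|Y|}$; because the latter is a \emph{free} module the factorization exists without any torsion obstruction, so tensoring with $\mathbb{Q}_p$ produces the required extension $\psi_k$ and, by the Mittag-Leffler argument already used in Lemma~\ref{l4}, a compatible system giving the unique extension to the inverse limit. Finite type of the topological $G$-module structure is immediate since each $G/\mathcal{M}_k$ is a finite $p$-group, so $\mathbb{Q}_p[G/\mathcal{M}_k]$ is finite-dimensional over $\mathbb{Q}_p$ and all transition maps are surjective.

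The main obstacle is the first step: correctly identifying the closed subgroup $[\ker d_0,\ker d_1 \cdot \ker d_0 \cdot \mathcal{M}_k]$ with the kernel of the projection $(\mathbb{Z}_pG)^{|Y|}\twoheadrightarrow(\mathbb{Z}_p[G/\mathcal{M}_k])^{|Y|}$. This requires unfolding the Fox-derivative-type description of $\overline{d_1}$ on the free basis $Y$ and checking that conjugation by lifts of $\mathcal{M}_k$ corresponds exactly to multiplication by $J_k$; the Peifer identity ensures that the $\ker d_1$-factor acts trivially modulo the commutator $[\ker d_0,-]$, so only the $\mathcal{M}_k$-part contributes. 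Everything else is then a repetition of the inverse-limit and $\varprojlim^1$-vanishing bookkeeping from Lemma~\ref{l4}, which is routine once the filtration has been identified.
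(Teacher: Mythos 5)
Your proposal follows essentially the same route as the paper: both rest on the free-crossed-module isomorphism $\ker d_0/[\ker d_0,\ker d_1\ker d_0]\cong(\mathbb{Z}_pG)^{|Y|}$ (which the paper cites from \cite{Mikh2014}), filter by the subgroups $[\ker d_0,\ker d_1\ker d_0\mathcal{M}_k]$, and rerun the universal-property and $\varprojlim$-bookkeeping of Lemma~\ref{l4} after tensoring with $\mathbb{Q}_p$ --- your added detail identifying that filtration with $(J_k)^{|Y|}$ is precisely what the paper leaves implicit, and your observation that freeness replaces quasirationality matches the lemma's hypotheses. The only slip is notational: your first displayed isomorphism should have $\ker d_0/[\ker d_0,\ker d_1\ker d_0]$ on the left (the abelianization), since $\ker d_0/[\ker d_0,\ker d_1]$ itself is the generally non-abelian crossed module.
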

\begin{proof}
As noted above, the topology on $Kerd_0$ is induced by the topology on $F(Y\cup X)$.
Then the argument of Lemma \ref{l4} (with small deviations) yields a presentation
$(\overline{Kerd_0})^{\wedge}_w\cong \varprojlim_k (\overline{F(Y\times F(X)/\mathcal{M}_k))^{\wedge}_w},$
and hence, identifying $\overline{C^{\wedge}_w}$ with the projective limit of linear algebraic groups,
the group of $\mathbb{Q}_p$-points of $\overline{C^{\wedge}_w}(\mathbb{Q}_p)$, we obtain
$$\overline{C^{\wedge}_w}(\mathbb{Q}_p)\cong \varprojlim_k
(\frac{kerd_0}{[kerd_0,kerd_1kerd_0\mathcal{M}_k]})^{\wedge}_w(\mathbb{Q}_p)\cong \varprojlim_k
(\mathbb{Q}_p(G/\mathcal{M}_k))^{|Y|}.$$
The latter isomorphism is a consequence of the general theory of crossed modules of pro-$p$-presentations.
Indeed, \cite[Theorem 1.13, Proposition 14]{Mikh2014} gives an isomorphism of $G$-modules
$kerd_0/[kerd_0,kerd_1kerd_0]\cong \mathbb{Z}_pG^{|Y|}.$
It remains to define, similarly to Lemma \ref{l4}, a finite type $G$-filtration by the rule
$$\varprojlim_{k,k\geq n}\frac{[kerd_0,kerd_1kerd_0\mathcal{M}_n]}{[kerd_0,kerd_1kerd_0\mathcal{M}_k]}\otimes
\mathbb{Q}_p.$$
\end{proof}

Consider the following diagram of Abelian prounipotent groups:
\begin{equation}
\xymatrix{
  \overline{C^{\wedge}_w} \ar[d]_{\kappa} \ar[r]^{\gamma} & \overline{R^{\wedge}_w} \ar[d]^{\tau} \\
  \overline{C_u} \ar[r]^{\mu} & \overline{R}_u} \label{eq2}
\end{equation}
We have denoted by $\overline{C^{\wedge}_w}, \overline{C_u}, \overline{R^{\wedge}_w}, \overline{R_u}$
the Abelianizations of the corresponding prounipotent groups.
The homomorphisms $\kappa$ and $\tau$ arise from the universal properties of
$\overline{C^{\wedge}_w}$ and $\overline{R^{\wedge}_w}$ with respect to the homomorphisms induced by
a continuous embedding of a pro-$p$- presentation \eqref{2} into the rational points of
the corresponding prounipotent presentation \eqref{5}
($kerd_0\hookrightarrow kerd_{0}(\mathbb{Q}_p)$ and $R\hookrightarrow R_u(\mathbb{Q}_p)$).
The homomorphism $\gamma$ is induced, in the notations \eqref{2}, by the homomorphism of pro-$p$-groups
$d_1:kerd_0\twoheadrightarrow R.$
\begin{theorem} \label{t01}
Let  \eqref{eq1} be a finite $QR$ pro-$p$-presentation of a pro-$p$- group $G$,
then one has a commutative diagram \eqref{eq2} of Abelian prounipotent groups in which
on $\mathbb{Q}_p$-points $\gamma(\mathbb{Q}_p)$ is a homomorphism of topological $G$-modules,
$\mu(\mathbb{Q}_p)$ is a homomorphism of $\mathcal{O}(G_u)^*$-modules, and $\kappa$ and $\tau$
are induced by embedding of a pro-$p$- presentation \eqref{2} into the corresponding
prounipotent presentation \eqref{5} and are $G$-homomorphisms on Zariski dense subgroups $\overline{R}$ and
$\frac{kerd_0}{[kerd_0,kerd_1kerd_0]}$ in $\overline{R^{\wedge}_w}(\mathbb{Q}_p)$
and $\overline{C^{\wedge}_w}(\mathbb{Q}_p)$, respectively.
\end{theorem}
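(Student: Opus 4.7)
The plan is to construct the diagram from two parallel functorial constructions (the pro-$p$-side and the prounipotent side) and then verify compatibility on Zariski dense subgroups, where every assertion reduces to computations already available from Lemmas \ref{l4}, \ref{l5} and Proposition \ref{p2}.

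First I would construct the vertical maps $\kappa$ and $\tau$. The embedding of the pro-$p$-presentation \eqref{2} into the prounipotent presentation \eqref{5} yields continuous homomorphisms $\ker d_0 \hookrightarrow \ker d_0(\mathbb{Q}_p)$ and $R \hookrightarrow R_u(\mathbb{Q}_p)$ that are Zariski dense by construction of \eqref{5} via prounipotent completion. Composing with the quotient maps to $\overline{C_u}(\mathbb{Q}_p)$ and $\overline{R_u}(\mathbb{Q}_p)$ produces continuous Zariski dense homomorphisms from the pro-$p$ abelianizations into Abelian prounipotent groups. Applying the universal property of continuous prounipotent completion (Definition \ref{d4}), extended to the abelianized setting just as in Lemma \ref{l4}, gives the unique homomorphisms $\kappa$ and $\tau$ of Abelian prounipotent groups.

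Next I would produce $\gamma$ and $\mu$ and check commutativity. Both are induced by the boundary map $d_1$: on the pro-$p$ side $d_1:\ker d_0\twoheadrightarrow R$ descends to $\overline{d_1}$ on abelianizations, and Lemma \ref{l5} together with Lemma \ref{l4} show that this extends uniquely, through the universal property, to a homomorphism $\gamma:\overline{C^{\wedge}_w}\to\overline{R^{\wedge}_w}$ of Abelian prounipotent groups. On the prounipotent side $\mu$ is just the abelianization of the boundary $\overline{d_1}$ in the prounipotent crossed module of Lemma \ref{l3}. Commutativity of \eqref{eq2} on $\mathbb{Q}_p$-points needs only to be verified on the Zariski dense images of $\ker d_0 / [\ker d_0, \ker d_1 \ker d_0]$ and $\overline{R}$, where both compositions coincide with the map induced by $d_1$ at the level of the original simplicial pro-$p$-group \eqref{2}. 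Zariski density and continuity then force commutativity on all $\mathbb{Q}_p$-points.

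For the equivariance claims I would use the explicit inverse-limit presentations. By Lemmas \ref{l4} and \ref{l5} we have
\[
\overline{R^{\wedge}_w}(\mathbb{Q}_p)\cong\varprojlim_n R/[R,R\mathcal{M}_n]\otimes\mathbb{Q}_p,\qquad
\overline{C^{\wedge}_w}(\mathbb{Q}_p)\cong\varprojlim_k(\mathbb{Q}_p[G/\mathcal{M}_k])^{|Y|},
\]
and at each finite level the map induced by $d_1$ is well-known to be $G$-equivariant in the classical (pro-$p$) crossed module picture \cite[Theorem 1.13, Proposition 14]{Mikh2014}. Passing to the limit and tensoring with $\mathbb{Q}_p$ preserves this structure, showing $\gamma(\mathbb{Q}_p)$ is a topological $G$-module homomorphism. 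For $\mu(\mathbb{Q}_p)$, both $\overline{C_u}(\mathbb{Q}_p)$ and $\overline{R_u}(\mathbb{Q}_p)$ carry $\mathcal{O}(G_u)^*$-module structures by Proposition \ref{p2} (the Peifer identity CM~2 forces the $F_u(X)$-action to factor through $G_u$), and $\mu$ is induced by a morphism of prounipotent crossed modules, so it respects the action. Finally, the equivariance of $\kappa$ and $\tau$ on the Zariski dense subgroups is just the statement that the embeddings $R \hookrightarrow R_u(\mathbb{Q}_p)$ and $\ker d_0 \hookrightarrow \ker d_0(\mathbb{Q}_p)$ intertwine the conjugation action of $F$ with that of $F_u(X)(\mathbb{Q}_p)$ restricted along $F \hookrightarrow F_u(X)(\mathbb{Q}_p)$, followed by passage to $G$ and $G_u$ via \eqref{eq3}.

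The step I expect to be the main obstacle is the last one: the $\mathcal{O}(G_u)^*$-action on $\overline{R_u}(\mathbb{Q}_p)$ does \emph{not} restrict to a $G$-action on all of $\overline{R^{\wedge}_w}(\mathbb{Q}_p)$, so one has to be careful not to overstate equivariance of $\kappa$ and $\tau$. The resolution is to restrict the claim to the Zariski dense subgroups $\overline{R}$ and $\ker d_0/[\ker d_0,\ker d_1\ker d_0]$, where the pro-$p$ conjugation action is literally the restriction of the prounipotent action along $F\hookrightarrow F_u(X)(\mathbb{Q}_p)$ and $G\hookrightarrow G_u(\mathbb{Q}_p)$, and no ring-algebra extension is needed.
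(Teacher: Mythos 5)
Your proposal is correct and follows essentially the same route as the paper: $\kappa$ and $\tau$ come from the universal property of continuous prounipotent completion applied to the Zariski dense embeddings, $\gamma$ and $\mu$ are induced by $d_1$, commutativity is checked on dense subgroups and propagated by density and continuity, and the equivariance statements are read off from the filtered inverse-limit descriptions in Lemmas \ref{l4} and \ref{l5} and from Proposition \ref{p2} (with the Peifer identity making $\overline{R_u}$ an $\mathcal{O}(G_u)^*$-module). Your closing remark correctly identifies why the $G$-equivariance of $\kappa$ and $\tau$ is only asserted on the dense subgroups $\overline{R}$ and $\ker d_0/[\ker d_0,\ker d_1\ker d_0]$, which is exactly how the paper states and proves the theorem.
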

\begin{proof}
Commutativity of the diagram follows from the fact that the morphisms in \eqref{5} are defined using
the morphisms in \eqref{2}.
Since the action both in the upper and the lower rows of the commutative diagram is defined by conjugation,
these are $G$- homomorphisms on dense subgroups $\frac{kerd_0}{[kerd_0,kerd_1kerd_0]}$ and $\overline{R}$,
where $G$ acts on $\overline{C_u}(\mathbb{Q}_p)$ and $\overline{R_u}(\mathbb{Q}_p)$
through the homomorphism into its prounipotent completion.
Continuity and the module homomorphism property of $\gamma$ are obvious from the construction
in Lemma \ref{l4} and Lemma \ref{l5}. The properties of $\mu$
follow from Proposition \ref{p2} and by general constructions from the beginning of this part of the paper.
Recall that since $R_u$ acts on $\overline{R_u}$ trivially, $\overline{R_u}$ is an $\mathcal{O}(G_u)^*$-module.

It is clear that $\kappa$ and $\tau$ determine continuous maps of topological vector spaces
in the introduced in Lemma \ref{l4} and Lemma \ref{l5} filtrations on
$\overline{R^{\wedge}_w}(\mathbb{Q}_p)$ and $\overline{C^{\wedge}_w}(\mathbb{Q}_p),$
and continuity is actually shown there.
\end{proof}

We have already noted (before Lemma \ref{l3}) that in any simplicial group degenerate in dimensions two
and greater, and in particular in the simplicial group \eqref{5}
one has an isomorphism $\pi_1(F_{\bullet})\cong ker\overline{d_1}.$
And hence for the prounipotent presentation \eqref{5} (in the notations of Lemma \ref{l3})
it is natural to introduce the second homotopy group of the presentation as
$$u_2(\mathbb{Q}_p)=ker\{ C_u(\mathbb{Q}_p)\rightarrow R_u(\mathbb{Q}_p) \},$$
and also for the $QR$ pro-$p$-presentation \eqref{2} the continuous prounipotent
completion of its second homotopy group
$${\pi_2}\widehat{\otimes}\mathbb{Q}_p=\varprojlim \frac{\pi_2}{\pi_2\cap
\frac{[kerd_0,kerd_1kerd_0\mathcal{M}_k]}{[kerd_0,kerd_1kerd_0]}}\otimes\mathbb{Q}_p.$$
The next Corollary yields a relation between the second homotopy group of
the $QR$-pro-$p$-presentation $\pi_2=ker \{ d_1:C \to R \}$ and the diagram from Theorem \ref{t01}.

\begin{corollary} \label{s0}
In the notations of the previous Theorem, one has the diagram of
topological vector spaces
$$\xymatrix{
  {\pi_2}\widehat{\otimes}\mathbb{Q}_p \ar[d]_{} \ar[r]^{} & \overline{C^{\wedge}_w}(\mathbb{Q}_p)
  \ar[d]_{\kappa(\mathbb{Q}_p)} \ar[r]^{\gamma(\mathbb{Q}_p)} & \overline{R^{\wedge}_w}(\mathbb{Q}_p)
  \ar[d]^{\tau(\mathbb{Q}_p)} \\
   u_2(\mathbb{Q}_p)\ar[r]^{} & \overline{C_u}(\mathbb{Q}_p)
   \ar[r]^{\mu(\mathbb{Q}_p)} & \overline{R_u}(\mathbb{Q}_p)   }$$
with exact rows, and an isomorphism of $\mathcal{O}(G_u)^*$-modules
$\overline{C_u}(\mathbb{Q}_p)\cong {\mathcal{O}(G_u)^*}^{|Y|}.$
\end{corollary}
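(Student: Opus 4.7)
The plan is to extend the commutative square of Theorem~\ref{t01} on the left by the kernels of its horizontal maps and to verify exactness of the resulting rows together with the freeness claim. Both rows arise by abelianizing short exact sequences coming from the crossed-module structure: in the pro-$p$ case $1 \to \pi_2 \to C \to R \to 1$, with $\pi_2$ central in $C$ by the Peifer identity CM~2), and analogously $1 \to u_2 \to C_u \to R_u \to 1$ in the prounipotent setting. The missing vertical map $\pi_2 \widehat{\otimes} \mathbb{Q}_p \to u_2(\mathbb{Q}_p)$ is induced functorially by the embedding of the pro-$p$ presentation \eqref{2} into the prounipotent presentation \eqref{5}, so commutativity of the full diagram reduces to that of the square already established.

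For the lower row I would pass through the equivalence between prounipotent groups and pronilpotent Lie algebras. The prounipotent short exact sequence then corresponds to an exact sequence $0 \to \mathfrak{u}_2 \to \mathfrak{c}_u \to \mathfrak{r}_u \to 0$ of pronilpotent Lie algebras, and it is elementary that for any such sequence the abelianized sequence $\mathfrak{u}_2 \to \mathfrak{c}_u^{\mathrm{ab}} \to \mathfrak{r}_u^{\mathrm{ab}} \to 0$ is exact, since the image of $\mathfrak{u}_2$ in $\mathfrak{c}_u^{\mathrm{ab}}$ and the kernel of $\mathfrak{c}_u^{\mathrm{ab}} \to \mathfrak{r}_u^{\mathrm{ab}}$ both equal $(\mathfrak{u}_2+[\mathfrak{c}_u,\mathfrak{c}_u])/[\mathfrak{c}_u,\mathfrak{c}_u]$. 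Translating back via the Lie algebra equivalence, and noting that an abelian prounipotent group is determined by (and has $\mathbb{Q}_p$-points equal to) its Lie algebra, yields exactness of the lower row.

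For the upper row I would invoke the QR hypothesis. At each level $k$ of the finite-type filtrations introduced in the proofs of Lemmas~\ref{l4} and~\ref{l5}, quasirationality makes the successive quotients of $\overline{C}$ and $\overline{R}$ finitely generated free Abelian pro-$p$-groups, and the pro-$p$ central extension above restricts to a short exact sequence of such groups at each filtration level. Tensoring with $\mathbb{Q}_p$ preserves exactness (flatness), and since the quotients become finite-dimensional $\mathbb{Q}_p$-vector spaces the resulting projective system is Mittag-Leffler, so $\varprojlim{}^{1}$ vanishes and the limit remains exact. This yields the upper row, with leftmost term identified with $\pi_2 \widehat{\otimes} \mathbb{Q}_p$ as defined just before the corollary.

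For the isomorphism $\overline{C_u}(\mathbb{Q}_p) \cong {\mathcal{O}(G_u)^*}^{|Y|}$, Proposition~\ref{p1} identifies $\mathrm{Ker}\,d_0 \to F_u(X)$ as the free prounipotent pre-crossed module on $Y$; passing to its Peifer quotient gives the free prounipotent crossed module on $Y$, and since $R_u$ acts trivially on the abelianization by CM~2), $\overline{C_u}(\mathbb{Q}_p)$ acquires a natural $\mathcal{O}(G_u)^*$-module structure freely generated by $Y$. Combined with the surjection ${\mathcal{O}(G_u)^*}^{|Y|} \twoheadrightarrow \overline{C_u}(\mathbb{Q}_p)$ from the proof of Proposition~\ref{p2}, this yields the desired isomorphism. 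The main obstacle I anticipate is the rigorous justification of freeness in this last step within the prounipotent/topological $\mathcal{O}(G_u)^*$-module category: the classical Huebschmann--Crowell argument for discrete presentations uses an explicit Fox-derivative construction, and its prounipotent analog must be carefully adapted beyond merely exhibiting a generating set.
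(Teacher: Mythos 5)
Your outline is essentially sound, but it diverges from the paper on the key structural input for the rows. The paper's proof rests on the observation that $R_u$, being a closed subgroup of the free prounipotent group $F_u(X)$, is itself free (generalizing Lubotzky--Magid), so the canonical surjection $C_u\twoheadrightarrow R_u$ \emph{splits}; combined with the centrality of $u_2$ (your CM~2) observation) this gives, as in Brown--Huebschmann's Proposition~4, a direct-sum decomposition $\overline{C_u}\cong u_2\oplus\overline{R_u}$, hence exactness of the lower row including injectivity on the left. Your replacement for the lower row --- the general right-exactness of abelianization, transported through the Lie-algebra equivalence --- is more elementary and needs no freeness, but it only yields exactness at the middle term and loses the splitting, which is the real content the paper extracts from freeness of $R_u$; if ``exact rows'' is read to include injectivity of $u_2(\mathbb{Q}_p)\to\overline{C_u}(\mathbb{Q}_p)$, your argument does not deliver it. For the upper row you argue exactly as the paper does (quasirationality makes the filtration quotients finitely generated free abelian pro-$p$-groups, tensoring with $\mathbb{Q}_p$ is exact, and $\varprojlim^{1}$ vanishes for the resulting surjective system of finite-dimensional spaces), though both you and the paper leave implicit the verification that the filtration on $\ker d_0$ is compatible with the one on $R$ level by level. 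For $\overline{C_u}(\mathbb{Q}_p)\cong{\mathcal{O}(G_u)^*}^{|Y|}$ you again follow the paper's route (universal property of the free crossed module from Proposition~\ref{p1} plus the surjection from Proposition~\ref{p2}), and the obstacle you flag --- rigorously establishing freeness in the topological $\mathcal{O}(G_u)^*$-module category rather than merely a generating set --- is precisely the point the paper outsources to Porter's Proposition~29 rather than proving; your instinct that a prounipotent Fox-derivative argument is needed there is well placed.
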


\begin{proof}
Actually, generalizing \cite{LM1982}, one can see that $R_u$ is a free prounipotent group,
and hence the canonical surjection $C_u\twoheadrightarrow R_u$ splits, and we can use the same
argument as \cite[Proposition 4]{BH}, taking into account Theorem on commutant closure \cite[4.3]{Wat}.
In particular, compiling \cite[Proposition 14]{Mikh2012} using the prounipotent analog of
the Brown--Loday Lemma (\cite[5.7]{BL}, \cite[4.3.8]{Por}), we see that $u_2(\mathbb{Q}_p)$
is the prounipotent analog of the second homotopy group of the presentation.
Coincidence of $ker\mu(\mathbb{Q}_p)$ with ${\pi_2}\widehat{\otimes}\mathbb{Q}_p$ follows from
quasirationality and the fact that $\varprojlim^{1}=0$
for a surjective projective system of finite dimensional vector spaces.

The standard isomorphism $\overline{C_u}(\mathbb{Q}_p)\cong {\mathcal{O}(G_u)^*}^{|Y|}$ \cite[Proposition 29]{Por}
follows from Proposition \ref{p2}, since $\overline{C_u}(\mathbb{Q}_p)$ possesses the universal property
of a direct sum of algebras with respect to homomorphisms into prounipotent $\mathcal{O}(G_u)^*$-modules.
Details can be completely reconstructed from the arguments in \cite[Proposition 29]{Por}
using Proposition \ref{p1}.
\end{proof}

\begin{proposition}\label{p01}
Let $G$ be a finitely generated pro-$p$-group given by a presentation \eqref{eq1} with one relation
$r\neq1$, and assume that the natural homomorphism from $G$ to the $\mathbb{Q}_p$-points
$G^{\wedge}_w(\mathbb{Q}_p)$ of its prounipotent completion is an embedding,
then $cd(G)=2.$
\end{proposition}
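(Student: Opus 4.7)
The plan is to invoke Corollary \ref{s0} for the (automatically quasirational) one-relator pro-$p$-presentation of $G$, use the embedding hypothesis to force the second homotopy group of the presentation to vanish after rationalization, and then deduce $cd(G)=2$ from the resulting length-two free resolution. Since a one-relation pro-$p$-presentation is quasirational (as recalled in the Introduction), Theorem \ref{t01} and Corollary \ref{s0} apply. Because $|Y|=1$, Corollary \ref{s0} gives $\overline{C_u}(\mathbb{Q}_p) \cong \mathcal{O}(G_u)^*$, and via \cite[Corollary 12]{Mikh2015} one identifies $\overline{R_u}(\mathbb{Q}_p)$ as the cyclic $\mathcal{O}(G_u)^*$-module generated by the class of $r$; under these identifications $\mu(\mathbb{Q}_p)$ is the canonical surjection between cyclic $\mathcal{O}(G_u)^*$-modules.

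The crux is to show that the injectivity of $G \hookrightarrow G^{\wedge}_w(\mathbb{Q}_p) \cong G_u(\mathbb{Q}_p)$ (using the isomorphism \eqref{eq3}) forces $\mu(\mathbb{Q}_p)$ to be an isomorphism, i.e. $u_2(\mathbb{Q}_p) = 0$. Intuitively, a nontrivial element of $u_2(\mathbb{Q}_p) = \ker \mu(\mathbb{Q}_p)$ would witness a prounipotent relation on the lifted relator not visible in the discrete or pro-$p$ worlds; tracing such an element through the vertical maps of \eqref{eq2} and the description of $R_u$ from \cite[Corollary 12]{Mikh2015} should produce either an element of $F_u(\mathbb{Q}_p)$ in the prounipotent normal closure of $r$ whose image in $G_u(\mathbb{Q}_p)$ lies in $G$ but is not the identity, or a strict inequality between the annihilator of $r$ computed in $\mathbb{Z}_p G$ and the one computed in $\mathcal{O}(G_u)^*$, each contradicting $G \hookrightarrow G_u(\mathbb{Q}_p)$. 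I expect this to be the main obstacle: technically it requires showing that the non-abelian commutator information in Lemmas \ref{l4} and \ref{l5} is precisely controlled by the embedding hypothesis.

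Once $\mu(\mathbb{Q}_p)$ is injective, chasing \eqref{eq2} gives $\pi_2 \widehat{\otimes} \mathbb{Q}_p = 0$: a class in $\ker \gamma(\mathbb{Q}_p)$ maps under $\kappa(\mathbb{Q}_p)$ into $\ker \mu(\mathbb{Q}_p)=0$, and the embedding forces $\kappa(\mathbb{Q}_p)$ itself to be injective (under Lemma \ref{l5} and Corollary \ref{s0} it is the natural comparison $\varprojlim_k (\mathbb{Q}_p[G/\mathcal{M}_k])^{|Y|} \to \mathcal{O}(G_u)^{*|Y|}$, injective because $\mathbb{Q}_p G \hookrightarrow \mathcal{O}(G_u)^*$ follows from $G \hookrightarrow G_u(\mathbb{Q}_p)$ together with Zariski density). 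By quasirationality, $\pi_2$ is a torsion-free finitely generated $\mathbb{Z}_p G$-module whose rationalization is presented in Corollary \ref{s0} as an inverse limit of finite-dimensional $\mathbb{Q}_p$-vector spaces, so $\pi_2 \widehat{\otimes} \mathbb{Q}_p = 0$ forces $\pi_2 = 0$. This asphericity converts the beginning of the Moore complex of \eqref{2} into a free $\mathbb{Z}_p G$-resolution of the augmentation module of length two, giving $cd(G) \leq 2$. Since $r \neq 1$ with $r \in F^p[F,F]$ the presentation is minimal and $H^2(G,\mathbb{F}_p) \neq 0$, so $cd(G) \geq 2$, and hence $cd(G)=2$.
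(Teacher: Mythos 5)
Your overall strategy tracks the paper's quite closely: both arguments run through the diagram of Corollary \ref{s0} with $|Y|=1$, use $\overline{R}_u(\mathbb{Q}_p)\cong\mathcal{O}(G_u)^*$ from \cite[Corollary 12]{Mikh2015}, and extract injectivity of $\kappa(\mathbb{Q}_p)$ from the embedding hypothesis essentially as you describe. The problem is that the step you yourself flag as ``the crux'' --- that the embedding forces $\mu(\mathbb{Q}_p)$ to be an isomorphism --- is never actually proved: you offer only a heuristic (``should produce either \dots or \dots, each contradicting \dots'') and concede it is the main obstacle. As written this is a genuine gap. Moreover, you have located the difficulty in the wrong place: the embedding hypothesis is not what makes $\mu(\mathbb{Q}_p)$ injective. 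Once one grants $\overline{C}_u(\mathbb{Q}_p)\cong\mathcal{O}(G_u)^*$ (Corollary \ref{s0} with $|Y|=1$) and $\overline{R}_u(\mathbb{Q}_p)\cong\mathcal{O}(G_u)^*$ (\cite[Corollary 12]{Mikh2015}), the map $\mu(\mathbb{Q}_p)$ is a surjection of free rank-one topological $\mathcal{O}(G_u)^*$-modules taking generator to generator, hence an isomorphism by the topological Nakayama lemma for the complete augmented algebra $\mathcal{O}(G_u)^*$; no contradiction argument and no appeal to $G\hookrightarrow G_u(\mathbb{Q}_p)$ is needed there. The embedding hypothesis is spent entirely on the vertical maps of \eqref{eq2}: on the injectivity of $\kappa(\mathbb{Q}_p)$ and on obtaining the $G$-embedding $\overline{R}\hookrightarrow\overline{R}^{\wedge}_w(\mathbb{Q}_p)$.

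Apart from this, your endgame differs from the paper's in a harmless way: you want $\pi_2\widehat{\otimes}\mathbb{Q}_p=0$, then $\pi_2=0$, then the length-two free resolution; the paper instead deduces $\overline{R}\hookrightarrow\mathcal{O}(G_u)^*$ (via Lemma \ref{l4} and left exactness of $\varprojlim$), concludes that $G\cdot\overline{r}$ is a permutational basis of $\overline{R}$, and invokes Mel'nikov's asphericity criterion \cite{Mel1} together with torsion-freeness of $G$, which itself comes from the embedding into the $\mathbb{Q}_p$-points of a prounipotent group. Your passage from $\pi_2\widehat{\otimes}\mathbb{Q}_p=0$ to $\pi_2=0$ also needs a word of care: what matters is not finite generation of $\pi_2$ over $\mathbb{Z}_pG$ (which you assert but do not need), but that $\pi_2$ injects into the inverse limit of its quotients along the filtration of Lemma \ref{l5}, and that each such quotient is torsion-free because it embeds into the free $\mathbb{Z}_p$-module $\mathbb{Z}_p[G/\mathcal{M}_k]^{|Y|}$, so that rationalization loses nothing. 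In short: the skeleton is right and close to the paper's, but the central step must be replaced by the direct module-theoretic argument above before the proof stands.
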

\begin{proof} First, note that the condition $G\hookrightarrow G_w^{\wedge}(\mathbb{Q}_p)$
implies that the homomorphism $\kappa(\mathbb{Q}_p)$ from the commutative diagram (Corollary \ref{s0})
$$\xymatrix{
  {\pi_2}\widehat{\otimes}\mathbb{Q}_p \ar[d]_{} \ar[r]^{} & \overline{C^{\wedge}_w}(\mathbb{Q}_p)
  \ar[d]_{\kappa(\mathbb{Q}_p)} \ar[r]^{\gamma(\mathbb{Q}_p)} & \overline{R^{\wedge}_w}(\mathbb{Q}_p)
  \ar[d]^{\tau(\mathbb{Q}_p)} \\
   u_2(\mathbb{Q}_p)\ar[r]^{} & \overline{C_u}(\mathbb{Q}_p) \ar[r]^{\mu(\mathbb{Q}_p)} & \overline{R_u}(\mathbb{Q}_p)
  }$$
is an isomorphism. Actually, since the image $\kappa(\mathbb{Q}_p)$ is dense, $\kappa(\mathbb{Q}_p)$
is an epimorphism. On the other hand, Lemma \ref{l5} implies that the elements $G\cdot y,$ where $y\in Y$
(in the notation of Definition \ref{2}) form a pseudo-basis of
$\overline{C^{\wedge}_w}(\mathbb{Q}_p)$ as a topological vector $\mathbb{Q}_p$-space.
But Corollary \ref{s0} implies that $\overline{C_u}(\mathbb{Q}_p)\cong {\mathcal{O}(G_u)^*}^{|Y|}$
and therefore, as a consequence, $\kappa(\mathbb{Q}_p)$ is injective.

Assume that $\overline{R}_u(\mathbb{Q}_p)\cong \mathcal{O}(G_u)^*$  \cite[Corollary 12]{Mikh2015},
then our diagram (only as a diagram of topological vector spaces) takes the form
\begin{equation}
\xymatrix{
  \mathcal{O}(G_u)^* \ar[d]_{\kappa(\mathbb{Q}_p)} \ar[r]^{\gamma(\mathbb{Q}_p)} & \overline{R}^{\wedge}_w(\mathbb{Q}_p)
  \ar[d]^{\tau(\mathbb{Q}_p)} \\
  \mathcal{O}(G_u)^* \ar[r]^{\mu(\mathbb{Q}_p)} & \mathcal{O}(G_u)^*.}
\end{equation}
Due to quasirationality of presentations of pro-$p$-groups with a single defining relation \cite[Proposition 1]{Mikh2014},
Lemma \ref{l4} gives an isomorphism
$$\overline{R^{\wedge}_w}(\mathbb{Q}_p)\cong \overline{R} \widehat{\otimes}\mathbb{Q}_p=\varprojlim_n
R/[R,R \mathcal{M}_n]\otimes \mathbb{Q}_p.$$ Then left exactness of the functor
$\varprojlim$ yields injectivity of
$\overline{R}\hookrightarrow \overline{R}\widehat{\otimes}\mathbb{Q}_p$, which implies
$G$-embedding $\overline{R} \hookrightarrow \overline{R}^{\wedge}_w(\mathbb{Q}_p)$.
Therefore, $\overline{R} \hookrightarrow \mathcal{O}(G_u)^*,$ and hence the elements of the form
$G\cdot \overline{r}$ form a permutational basis in $\overline{R}.$ But then $G$ is an aspherical
pro-$p$-group \cite[\S2]{Mel1}, and since it is torsion free, one has $cd(G)= 2$ \cite[(3.2)]{Mel1}.
Let us note that $G^{\wedge}_w(\mathbb{Q}_p)\cong G_u(\mathbb{Q}_p)$ \eqref{eq3}, and hence
the statement of Proposition \ref{p01} one could equivalently assume $G\hookrightarrow G_u(\mathbb{Q}_p).$
\end{proof}

The following Corollary generalizes and explains the known group theory results
(see, for example, \cite{Lab} and \cite{Rom},
where absence of divisors of 0 in the graded algebra of a filtration is used).
We shall say that a pro-$p$-group $G$ is $p$-regular if for any finite quotient $G/\mathcal{M}_n(G)$
there exists a torsion free nilpotent quotient $G/V$ and $V\subset\mathcal{M}_n(G)$.
We shall also say that a pro-$p$-group $G$ has a $p$-regular filtration
$\mathcal{V}=(V_n,n \in \mathbb{N})$ if for any finite quotient
$G/\mathcal{M}_k(G)$ there exists $m(k) \in \mathbb{N}$ such that $V_{m(k)}\subset\mathcal{M}_k(G)$
and the quotients of this filtration are torsion free.

\begin{corollary} \label{s4}
Assume that a pro-$p$-group $G$ with one relation is $p$-regular, then $cd(G)= 2$.
In particular, if $G$ has a $p$-regular filtration $\mathcal{V}$, then $cd(G)= 2$.
\end{corollary}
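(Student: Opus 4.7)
The plan is to apply Proposition \ref{p01}: since $G$ has a single nontrivial defining relation, it suffices to show that the natural map $\rho : G \to G^{\wedge}_w(\mathbb{Q}_p)$ is injective.

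Fix $g \in G$ with $g \neq 1$. Since $G$ is a finitely generated pro-$p$-group, the Zassenhaus filtration is separating, i.e.\ $\bigcap_n \mathcal{M}_n(G) = 1$, so there is an $n$ with $g \notin \mathcal{M}_n(G)$. By $p$-regularity I may choose a normal subgroup $V \subset \mathcal{M}_n(G)$ such that $H := G/V$ is torsion-free nilpotent; then the image of $g$ in $H$ is nontrivial. The Malcev theorem recalled in the introduction realizes the finitely generated torsion-free nilpotent group $H$ as a Zariski dense subgroup of the $\mathbb{Q}_p$-points of its own rational prounipotent completion $H^{\wedge}_w$ (via Campbell--Hausdorff / Malcev coordinates). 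The composition $G \twoheadrightarrow H \hookrightarrow H^{\wedge}_w(\mathbb{Q}_p)$ is then a continuous homomorphism into the $\mathbb{Q}_p$-points of a prounipotent group; by the universal property in Definition \ref{d4} it factors through $\rho$. Since the composition is nontrivial on $g$, so is $\rho(g)$, giving the desired embedding $G \hookrightarrow G^{\wedge}_w(\mathbb{Q}_p)$, whence $cd(G) = 2$ by Proposition \ref{p01}.

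The ``in particular'' clause reduces to the first part: given a $p$-regular filtration $\mathcal{V} = (V_n)$, for each $k$ the subgroup $V := V_{m(k)}$ satisfies $V \subset \mathcal{M}_k(G)$ and $G/V$ is torsion-free nilpotent, so $G$ is $p$-regular in the sense used above. The principal obstacle is the pro-$p$-version of the Malcev embedding for the quotient $H$: one needs to realize $H$ as a compact subgroup (a ``lattice'') of the $\mathbb{Q}_p$-points of a unipotent $\mathbb{Q}_p$-group, which is done via Malcev coordinates in the nilpotent setting; continuity of the resulting composite into $H^{\wedge}_w(\mathbb{Q}_p)$ is then automatic by \cite[Lemma A.7]{HM2003}, and Zariski density and the universal factorization through $G^{\wedge}_w$ are routine.
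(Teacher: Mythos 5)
Your proof is correct and follows essentially the same route as the paper: both reduce to Proposition \ref{p01} by establishing the embedding $G \hookrightarrow G^{\wedge}_w(\mathbb{Q}_p)$ via the fact that the torsion-free nilpotent quotients supplied by $p$-regularity embed into $\mathbb{Q}_p$-points of unipotent groups (the paper cites \cite[Corollary A.4]{HM2003} for this pro-$p$ Malcev step, which is the precise reference you should use rather than the discrete Malcev theorem from the introduction). The only cosmetic difference is that you separate points one at a time using the universal property, whereas the paper packages the same separation as left exactness of $\varprojlim$ applied to $G \cong \varprojlim G_{\lambda}$ together with the canonical epimorphism $G^{\wedge}_w \twoheadrightarrow \varprojlim (G_{\lambda})^{\wedge}_w$.
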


\begin{proof}
If $G_{\lambda}$ is a finitely generated nilpotent pro-$p$-group without torsion, then, due to
\cite[Corollary A.4]{HM2003}, $G_{\lambda}$ is included into the group of $\mathbb{Q}_p$-points of a
unipotent group $U$ over $\mathbb{Q}_p,$ and hence $G_{\lambda}$ is embedded into the group of
$\mathbb{Q}_p$-points of its continuous prounipotent completion $(G_{\lambda})^{\wedge}_w$.

Since $G$ is $p$-regular, one has $G\cong\varprojlim G_{\lambda},$ where $G_{\lambda}$ are
finitely generated nilpotent torsion free pro-$p$-groups. For each $\lambda$
we have an embedding $G_{\lambda}\hookrightarrow (G_{\lambda})^{\wedge}_w(\mathbb{Q}_p),$
and due to left exactness of the projective limit functor, we have also an embedding
$G\hookrightarrow \varprojlim (G_{\lambda})^{\wedge}_w(\mathbb{Q}_p).$
Obviously, there exists an epimorphism $\zeta: G^{\wedge}_w \twoheadrightarrow \varprojlim (G_{\lambda})^{\wedge}_w,$
and the diagram
$$\xymatrix{
  G \ar[dr]_{\gamma} \ar[r]^{\beta}
                & G^{\wedge}_w(\mathbb{Q}_p) \ar[d]^{\zeta(\mathbb{Q}_p)}  \\
                & \varprojlim (G_{\lambda})^{\wedge}_w(\mathbb{Q}_p)             }$$
is commutative, but since $\gamma$ is injective, $\beta$ is also en embedding. And we can apply
Proposition \ref{p01}.
Clearly, the absence of torsion on the graded quotients of a $p$-regular filtration $\mathcal{V}$
is sufficient for $p$-regularity of $G$.
\end{proof}

\appendix

\section*{Acknowledgments}
The author thanks A. S. Mishchenko and V. M. Manuilov for useful discussions and constructive criticism.


\begin{thebibliography}{0}

\bibitem{And}
 M.~Andre,
 Methode simpliciale en algebre homologique et algebre commutative, Lecture Notes in Math.,
 37,
 Springer-Verlag,
 Berlin,
 1967


\bibitem{BH}
 R.~Brown\, J.~Huebschmann,
 Identities among relations,
 Low-dimensional topology,
 London Math. Soc., Lecture Notes Series,
 48,
 1982,
 153--202

\bibitem{BL}
 R.~Brown\,J.-L.~Loday,
 Van Kampen Theorems for diagrams of spaces,
 Topology,
 26,
 1987,
 311--337

\bibitem{Car}
P.Cartier,
A primer of Hopf algebras. Frontiers in Number Theory, Physics, and Geometry II.,
2007,537--615


\bibitem{DNR}
 S.~D$\check{a}$sc$\check{a}$lescu\, C.~N$\check{a}$st$\check{a}$sescu\, S.~Raianu\,
 Hopf Algebras. An Introduction. Pure and Applied Mathematics,
 235,
 Marcel Dekker,
 2001

 \bibitem{DM}
 P.~Deligne\, J. Milne\,
 Tannakian categories, in Hodge cycles, motives, and Shimura varietes, Springer Lecture Notes in Math. 900, 1982, pp. 101--228

\bibitem{Die}
 J.~Diedonne,
 Introduction to the theory of formal groups,
 Marcel Dekker Inc.,
 1973

\bibitem{Gru}
K. ~Gruenberg,
Relation modules of finite groups,
RCSM,
 25,
AMS,
1976

\bibitem{Hain1992}
 R.~Hain,
 Algebraic cycles and variations of mixed Hodge structure,
 Complex Geometry and Lie Theory, Proc. Symp. Pure Math.,
 53,
 1991, pp. 175--221

\bibitem{HM2003}
 R.~Hain\, M.~Matsumoto,
 Weighted completion of Galois groups and Galois actions on the fundamental group of $\mathbb{P}^1-\{0,1, \infty\}$,
 Compos. Math.,
 139,
 2,
 2003, pp. 119--167


\bibitem{KPT}
 L.~Katzarkov\, T.~Pantev\, B.To\"{e}n,
 Algebraic and topological aspects of the schematization functor,
 Compositio Math.,
 145,
 2009,
 633–-686

 \bibitem{Koch}
 H.~Koch\,
 Galois theory of $p$-extensions,
 Springer,
 2002

\bibitem{Lab}
 J.~Labute,
 Alg\`{e}bres de Lie et pro-p-groupes d\'{e}finis par une seule relation,
 Inventiones mathematicae,
 4,
 2,
 1967, pp. 142--158


\bibitem{LM1982}
 A.~Lubotzky\, A. Magid,
 Cohomology of unipotent and prounipotent groups,
 J.Algebra,
 74,
 1982, pp. 76--95

 \bibitem{Mac}
 S.~Mac Lane\,
 Categories for the working mathematician,
 Springer Verlag,
 1971

 \bibitem{Mel1}
 O. V. Mel'nikov, ``Aspherical pro-pp-groups'', Sb. Math., 193:11 (2002), 1639--1670



\bibitem{Mikh2017}
 A.\, M.~Mikhovich,
Identity Theorem for pro-$p$-groups,
 arXiv:1703.02996


\bibitem{Mikh2012}
 A.\, M.~Mikhovich
 Homotopy of profinite groups,
 arXiv:1205.4365

\bibitem{Mikh2017a}
A.\, M.~Mikhovich,
Quasirationality and aspherical (pro-$p$)presentations
arXiv:1704.05515

\bibitem{Mikh2014}
 A.~Mikhovich
 Quasirational relation modules and $p$-adic Malcev completions,
 Topol. Appl.
 201,
 2016,
 86--91

\bibitem{Mikh2015}
 A.~Mikhovich,
 Proalgebraic crossed modules of quasirational presentations,
 Trends in Mathematics,
 5,
 Bikkh\"{a}user,
 2016,
 109--114, arXiv:1507.03155v4


\bibitem{Milne}
 J.~Milne,
 Algebraic groups, Lie groups and their arithmetic subgroups, http://www.jmilne.org/math/CourseNotes/ALA.pdf,
 2011



\bibitem{Pas}
 D. Passman,
 The algebraic structure of group rings,
 Pure and Applied Mathematics, Wiley-Interscience,
 1977


\bibitem{Por}
 T.~Porter,
 Profinite Algebraic Homotopy,
 http://ncatlab.org/timporter/ files/ProfAlgHomotopy.pdf,
 2009

\bibitem{Pri2008}
 J.P.~Pridham,
 Pro-algebraic homotopy types,
 Proc. London Math. Soc.,
 97,
 2,
 2008,
 273--338


\bibitem{Pri2012}
 J.P.~Pridham,
 On the $l$-adic pro-algebraic and relative pro-$l$-fundamental groups,
 Arithmetics of Fundamental Groups, Contr. in Math. and Comp. Sciences,
 2,
 Springer,
 2012,
 245--279

\bibitem{Qui5}
 D.~Quillen,
 Rational homotopy theory,
 Annals of Math.,
 90,
 2,
 1969,
 205--295

\bibitem{ZR}
 L.~Ribes\, P.~Zalesskii,
 Profinite Groups,
 A Series of Modern Surveys in Mathematics,
 40,
 Springer,
 2000

\bibitem{Rom}
 N.~Romanovskii,
 On pro-p-groups with a single defining relator,
 Israel Journal of Mathematics,
 78,
 1,
 1992,
 65--73

 \bibitem{Se}
 J.\, P.~Serre\,
 Structure des certains pro-$p$-groupes (d'apres Demushkin),
 S\'eminaire Bourbaki,
 252,
 1--11,
 1962--1963


\bibitem{Vez}
 A.~Vezzani,
 The pro-unipotent completion,
 http://users.mat.unimi.it/users/vezzani/ Files/Research/prounipotent.pdf


\bibitem{Wat}
 W.~Waterhouse,
 Introduction to Affine Group Schemes,
 Springer,
 1979
\end{thebibliography}
\end{document}